\RequirePackage{lineno}
\documentclass[12pt,pra,aps,amssymb,amsfonts,amsmath,tightenlines, nofootinbib]{revtex4}
\usepackage{placeins}
\usepackage{dsfont}
\usepackage{graphicx}
\usepackage{caption}
\usepackage{amssymb,amsfonts,amsthm}
\usepackage{color}
\usepackage{verbatim}
\usepackage[normalem]{ulem}
\usepackage{enumerate}
\usepackage{mathrsfs}
\usepackage{bm,float}
\usepackage{mathtools}
\usepackage{extarrows}

\usepackage{textgreek}
\usepackage{lgreek}
\usepackage{gensymb}

\newtheorem{Proposition}{Proposition}

\usepackage{multirow}
\usepackage{rotating}
\usepackage{caption}
\usepackage{booktabs,siunitx,array,threeparttable}
\usepackage{tikz} 
\usetikzlibrary{shapes.geometric,calc,shapes,intersections,patterns,angles,quotes,through,backgrounds,decorations.markings,graphs}
\tikzset{myDot/.style={draw,shape=circle,fill=black,minimum size=6.0pt,inner sep=0.0pt,line width=1pt}} 
\tikzset{myDot2/.style={draw,shape=circle,fill=white,minimum size=6.0pt,inner sep=0.0pt,line width=1pt}}
\tikzset{edgeLabel/.style={font=\scriptsize, red!95!black,right}} 
\tikzset{extendLine/.style 2 args={shorten >=-{#2},shorten <=-{#1}}} 
\tikzset{myLine/.style={line width=.3mm}} 
\tikzset{myDottedLine/.style={dotted, line width=.3mm}} 
\tikzset{myDashedLine/.style={dashed, line width=.3mm}} 
\tikzset{myBoldLine/.style={line width=.6mm}} 

\begin{document}
\title{Tonnetz Theory, Classical Harmony, and the \\Combinatorial Geometry of Abstract Musical Resources}
\author{Jeffrey~R.~Boland$^1$ and Lane~P.~Hughston$^{2,3}$}

\affiliation{$^1$Syndikat LLC, 215 South Santa Fe Avenue, Los Angeles, California 90012, USA\\
$^{2}$School of Computing, Goldsmiths College, New Cross, London SE14\,6NW, UK\\
$^{3}$Artificial Intelligence and Mathematics Research Lab, James Carter Road, Mildenhall, Bury St Edmunds IP28\,7DE, UK}

\begin{abstract}
\noindent 
In a previous submission, we established a relation between tone networks and configurations.  It was shown that the Eulerian tonnetz can be represented by a $\{12_3\}$ of Daublebsky von Sterneck type D222.  We also constructed a tonnetz for Tristan-genus chords (dominant sevenths and half-diminished sevenths) and showed that this tonnetz can be represented by a $\{12_3\}$ of type D228. In both constructions the associated Levi graphs play an important role. Here we look at the tonnetze associated with some other musical systems, thereby offering concrete examples of an abstract view of music as combinatorial geometry. First, we look at the tonal harmonies of the classical period. In the case of diatonic triads, we show the existence of a bipartite graph of type $\{7_3\}$ and girth four that represents the relations between the seven diatonic degrees and their pitch classes. In the case of diatonic seventh chords, we obtain a Fano configuration $\{7_3\}$, which gives a characterization of the voice-leading relations that hold between such chords.
Next, we construct a tonnetz for pentatonic music based on the Desargues configuration $\{10_3\}$ and we construct a tonnetz for the twelve-tone system based on the Cremona-Richmond configuration $\{15_3\}$. Both can be used as resources for composition. Finally, we show that the relation between the chromatic pitch-class set and the major triad set is represented by a D222. The minor triads are in one-to-one correspondence with the members of a certain class of hexacycles in the Levi graph of this configuration. In this way, the characteristic duality between major and minor triads in the tonnetz can be broken. 
\begin{center}
{\scriptsize {\bf Keywords: 
Music and mathematics; tonal harmony; tonnetz;  biregular graphs; Fano configuration; Desargues configuration; Cremona-Richmond configuration;  Richard Wagner; Arnold Schoenberg.} }
\end{center}
\vspace{0.05cm}
{\scriptsize 2020 {\it Mathematics Subject Classification}: 00A65, 05B25, 05B45, 05C90, 14N20}
\end{abstract}

\maketitle
%
\section{Introduction}
\label{sec:Introduction}

\noindent The Eulerian tonnetz is usually depicted in the form of a triangular tessellation of the Euclidean plane with pitch classes of the chromatic scale placed at its vertices. Another common representation takes the form of a tessellation of the plane by hexagons, with major and minor triads at the vertices. The two representations are dual in the sense that the faces of one correspond to the vertices of the other, and the edges are in one-to-one correspondence. These representations have the advantage of being planar but face the disadvantage of being infinite. If one gives up planarity then both versions of the tonnetz admit finite graphical representations. 
In Figure \ref{finite_Euler_tonnetz} we see a finite representation of the pitch-class tonnetz. In this regular graph of degree six, the twelve tones of the chromatic scale are arranged in a cycle of fifths. Each tone is joined to the six other tones with which it forms a consonant interval. Thus, the note $C$ is connected to $E_\flat$, $E$, $F$, $G$, $A_\flat$, and $A$. The resulting system of obtuse triangles, representing major and minor triads, comes equipped with an incidence relation, wherein the major triads act as ``points'' and the minor triads act as ``lines.''

\begin{figure}[htbp] 
\centering
\includegraphics[clip,scale=.65]{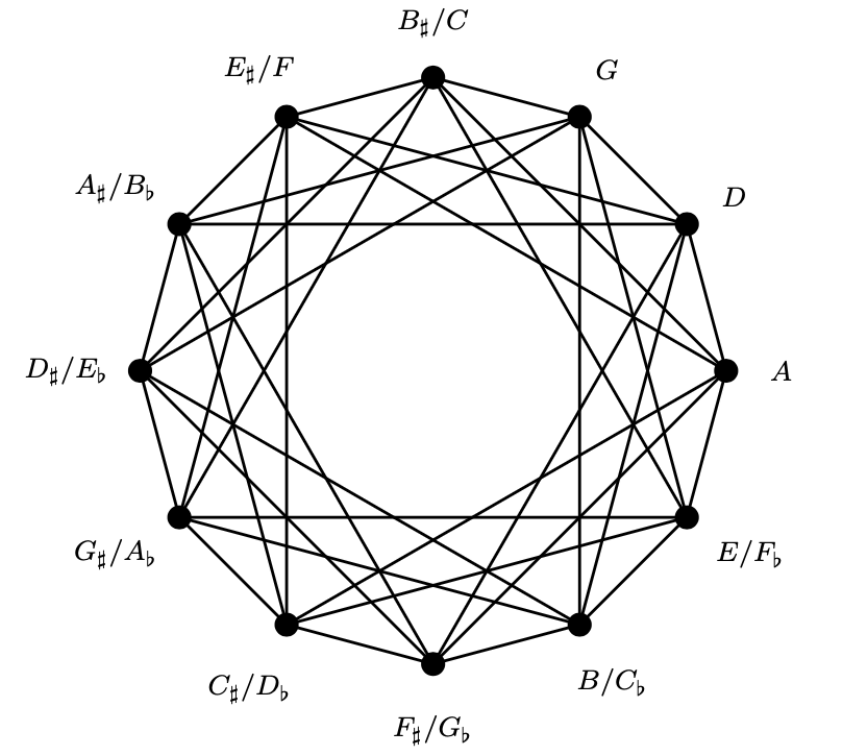}
\caption{The tonnetz, here depicted as a finite regular pitch-class graph of degree six, is dual to the Levi graph of the configuration $\{12_3\}$  of Daublebsky von Sterneck type D222. The notes of the chromatic scale are arranged in a cycle of fifths. The major and minor triads are represented by obtuse triangles such as $(C, E, G)$ and $(C, E_\flat, G)$. These 24 obtuse ``faces'' correspond to the 24 vertices of the dual Levi graph, shown in Figure \ref{Levi_graph_of_eulerian_tonnetz}. Each edge of the pitch-class graph belongs to one ``major'' triangle and one ``minor'' triangle.  Hence, there is a one-to-one correspondence between the edges of the pitch-class graph and the edges of the Levi graph. Thus, the edge $(C,E)$ belongs to the triangles $(C, E, G)$ and $(A, C, E)$, and corresponds to the edge $(CM, Am)$ in the ``fused triad'' Levi graph.  Each vertex in the pitch-class graph belongs to six obtuse triangles -- for example, the note $C$ belongs to $CM$, $FM$, $A_\flat$M, $Cm$, $Fm$, $Am$. These six triads form the unique $2p$-hexacycle in which each chord contains the note $C$. This cyclic ``face''  of the Levi graph corresponds to the vertex $C$ in the pitch-class tonnetz.}
\label{finite_Euler_tonnetz}
\end{figure}

\begin{figure}[htbp] 
\centering
\includegraphics[clip,scale=0.55]{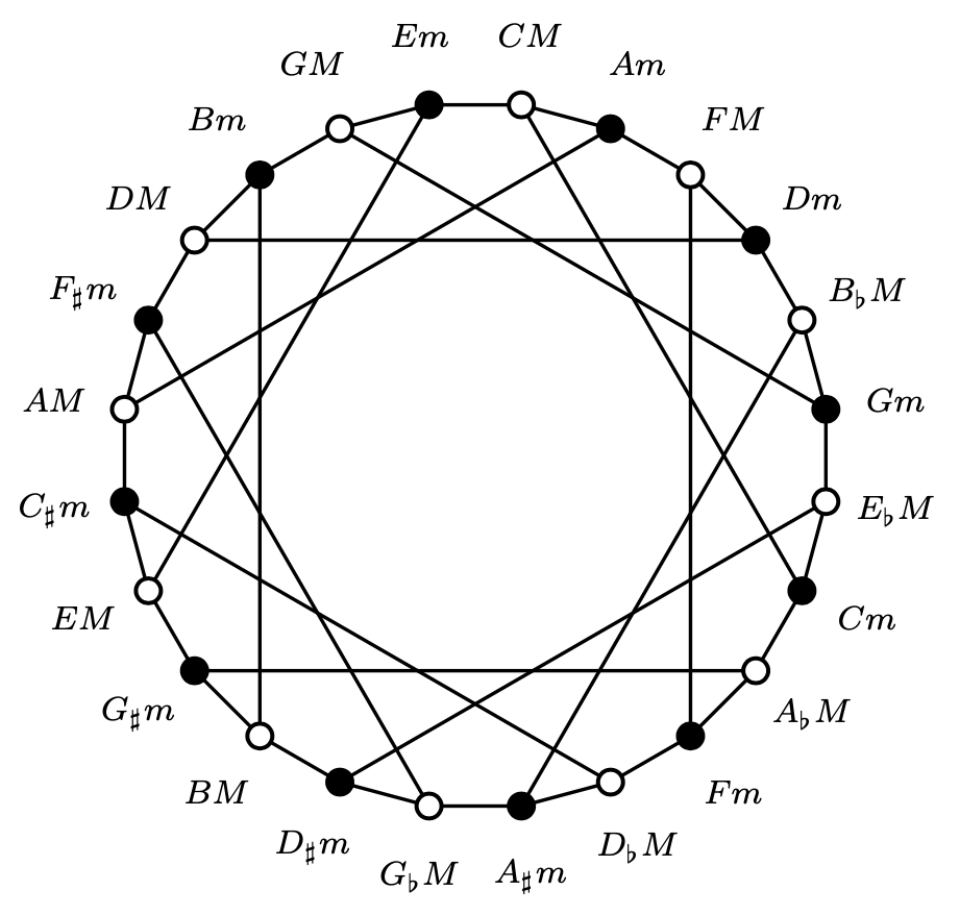}
\caption{The Levi graph of the tonnetz. The dual of the finite form of the Eulerian pitch-class tonnetz is a biregular graph of degree three and girth six, with twelve white vertices representing major triads and twelve black vertices representing minor triads. Each vertex of the Levi graph corresponds to an obtuse triangle of vertices in the pitch-class tonnetz.}
\label{Levi_graph_of_eulerian_tonnetz}
\end{figure}

Two triads are said to be incident if their triangles share an edge. In Figure \ref{Levi_graph_of_eulerian_tonnetz}, we see the dual of the pitch-class tonnetz, which takes the form of a Levi graph. By a Levi graph, we mean a finite bipartite graph that is biregular and of girth at least six. A bipartite graph is said to be {\em biregular} if all vertices of a given type have the same degree.  Each obtuse triangle of the pitch-class tonnetz represents  a vertex of the Levi graph, the major triads corresponding to white vertices and the minor triads to black vertices. Two vertices are adjacent on the Levi graph if and only if the associated triangles share an edge -- that is, if the two triads share a pair of tones. It is well known that each Levi graph determines a unique combinatorial configuration (Levi 1929, Coxeter 1950, Gr\"unbaum 2009). Hence, the tonnetz can be represented by such a configuration.  

In Boland and Hughston (2026) [henceforth, B-H (2026)],  it was shown that the configuration of the Eulerian tonnetz is the Daublebsky von Sterneck  $\{12_3\}$  of type D222. We also showed that a tonnetz for Tristan-genus chords (the twelve dominant seventh chords together with the twelve half-diminished sevenths) can be constructed based on the Daublebsky von Sterneck  $\{12_3\}$  of type D228. This remarkable tonnetz can be used for the study of the nineteenth-century harmonies typified in the later works of Richard Wagner. In Figure \ref{Figure RR} we show a fragment of the tessellation of the plane generated by the Levi graph of the D228, illustrating how a certain sequence of eight chords accompanying an important passage in Br\"unnhilde's final aria in G\"otterd\"ammerung maps to an octacycle in the Tristan-genus tonnetz. 

\begin{figure}[htbp] 
\centering
\includegraphics[clip,scale=0.57]{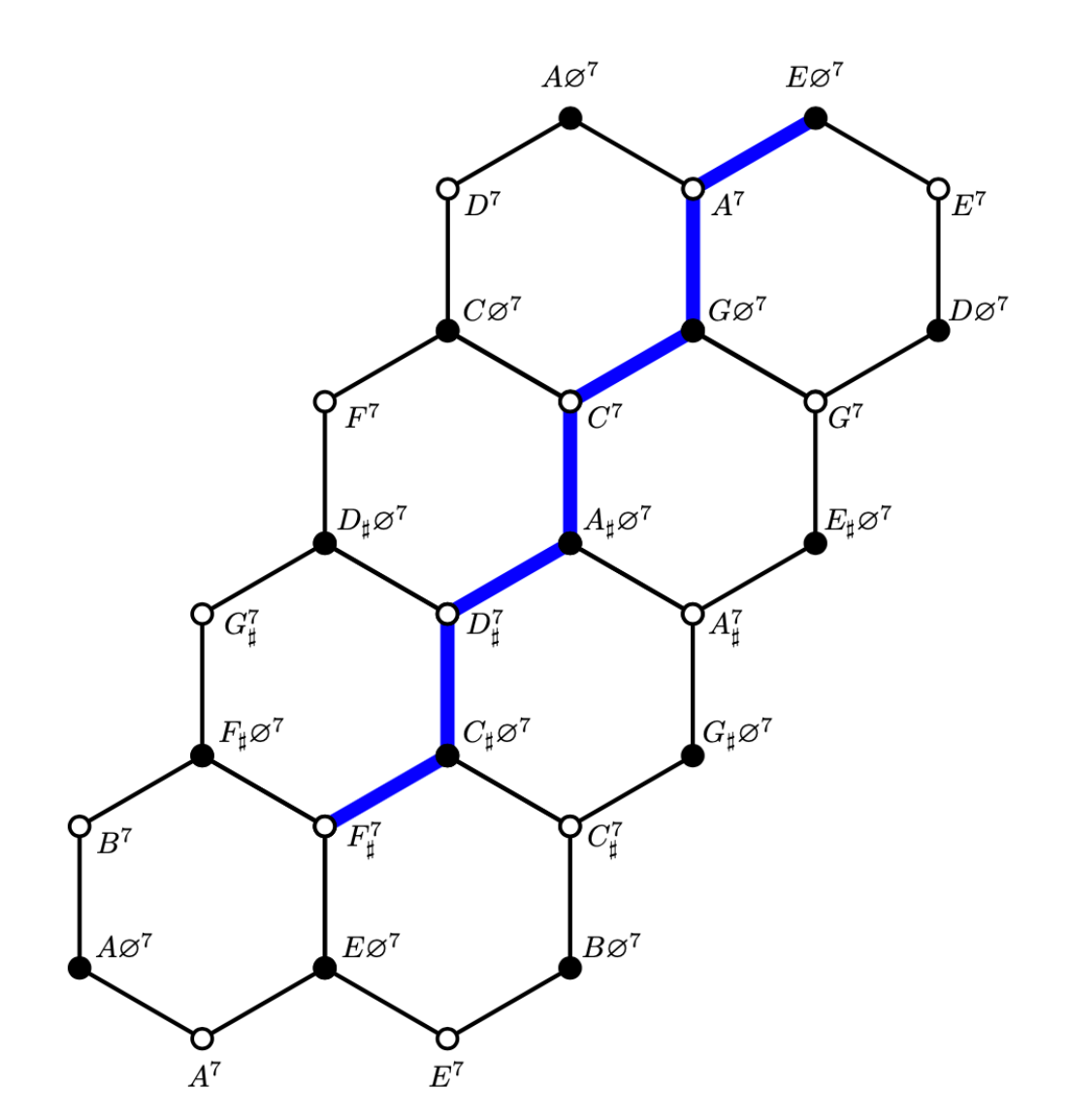}
\caption{Tessellation of the Tristan-genus tonnetz with the octacycle of the immolation progression from the final scene in Act III of G\"otterd\"ammerung marked in bold, starting at $E \varnothing ^7$.}
\label{Figure RR}
\end{figure}

Going forward, we assume the results and conventions of B-H (2026) and we defer to that work for general references concerning the Eulerian tonnetz and the tools we have used to establish relations between tonnetze and biregular graphs.  
The purpose of the present paper is to take this investigation further, with the construction of the following musical resources: (i) a set of tonnetze for the triads and seventh chords of diatonic tonal harmony, based on two distinct biregular graphs of type $\{7_3\}$, one of which is the Levi graph of the Fano configuration $\{7_3\}$; (ii) a tonnetz for pentatonic music based on the Desargues configuration $\{10_3\}$; and (iii) a tonnetz for twelve-tone music based on the Cremona-Richmond configuration $\{15_3\}$.  
We also undertake an analysis of the extent to which tonnetz constructions are predicated on voice-leading relations, or whether more general set-theoretic inclusion relations can be used as a basis for such constructions.

In Section \ref{sec:Tonal Harmony} we raise the question of how best to represent tonal harmony in the same spirit in which we have pursued the Levi graphs, configurations and tessellations associated with the Eulerian tonnetz. The literature of the tonnetz is curiously ambivalent on the matter of where it stands in relation to tonal harmony. Here we use the term ``tonal harmony''  in the sense of the treatments by Levarie (1954), Schoenberg (1978), Forte (1979), Piston (1987), and others, involving the major and minor scales along with the associated triads and seventh chords. We take it as given that tonal harmony deserves a mathematical treatment at least at the same level of perspicuity with which  investigations of the Eulerian tonnetz have been pursued. 

This we undertake and in doing so we put forward the premise that biregular graphs of type $\{7_3\}$ play a central role in the analysis of tonal harmony.  
An interesting feature of this analysis is the interplay of structures involving triads and tetrachords. 
We observe that triadic harmonies in major and minor modes can be interpreted in terms of the biregular graph of type $\{7_3\}$ given by Figure \ref{Figure AA}. This graph functions as a tonnetz for both modes and it admits a tessellation of the plane shown in Figure \ref{Figure DD},  intertwining three major triads, three minor triads, one diminished triad, and the pitch classes of which these chords are composed. The conclusions are summarized in Proposition \ref{Diatonic proposition}.

In the case of diatonic seventh chords an altogether  different type of tonnetz emerges, with the structure of a Fano $\{7_3\}$. We obtain a Levi graph and a configuration, the so-called Fano configuration, and we also obtain a tessellation of the plane -- the structures thus arising are shown in Figures \ref{Fano Configuration}, \ref{Figure Heawood} and \ref{Figure QQ} for the seventh chords of the key of $C$ major. The results are summarized in Proposition \ref{Seventh chord proposition}. Then we comment on the representation of chord progressions for triads and sevenths.

In Section \ref{Pentatonic Tone Network} we consider the idea that other systems of music might have their own tonnetze, and pursuant to this thought we construct such a resource for pentatonic musical structures.  We look at the formation of two-element and three-element subsets from a set of five elements. 

From this rather simple premise a surprisingly rich theory can be developed, leading to a tonnetz based on the Desargues configuration $\{10_3\}$. 
In Section \ref{Remarks on the Twelve-Tone System} an analogous construction is proposed for twelve-tone theory. We fix a hexachord -- that is to say, an unordered subset of six elements out of a set of twelve -- then we consider the fifteen ways of forming two-element subsets and the fifteen ways of forming triplets of disjoint two-element subsets. The resulting map has the structure of the Cremona-Richmond configuration $\{15_3\}$. 

The method of construction that we employed to assemble these configurations for pentatonic music and twelve-tone music is rather different from the method used in our analysis of the $\{12_3\}$ of the Eulerian tonnetz in B-H (2026), so one can ask if there is a more unified approach. 
The thinking behind many of the developments in tonnetz theory during the 1990s and 2000s was based on considerations of voice leading. The harmonic intuitions behind such thinking led to the  $\bf P$, $\bf L$ and $\bf R$ operations, and their generalizations, that tend to dominate these discussions even today and give explicit representations of the mechanics of semitonal voice leading. 

The central argument of Section \ref{sec:Subset relations vs voice-leading relations} is that we can put the consideration of voice leading to one side and develop the theory of the tonnetz from set-theoretic constructions. In many respects this is quite refreshing: it gives one tighter control of the mathematics and allows one to approach the theory in an objective way that avoids some of the prejudices or preconceptions that may be implicit in the theory of voice leading, such as those involving intuitive but elusive notions of distance, work, efficiency and parsimony. Our main results in this section are Propositions \ref{prop: incidence structures of note-major and note-minor} and \ref{prop: set inclusion relations}. We show that the tonnetz can be constructed on the basis of set-inclusion relations involving a pair of 12-sets: the pitch-class set and the major triad set. It is obvious that there is a natural three-to-three map between these sets. What is less obvious is that the resulting incidence structure is precisely that of the D222 of Daublebsky von Sterneck. A similar result holds for the minor triads, and the corresponding Levi graphs are drawn in Figure \ref{note-chord tonnetz}. When these are combined with the original Levi graph we obtain a tripartite tonnetz of major triads, minor triads and pitches, shown in  Figure \ref{fig:tripartite Levi graph}.

\section{Tonal Harmony}
\label{sec:Tonal Harmony}

\noindent Anyone new to the tonnetz will eventually be led to ask how the elaborate, finely developed common-practice-era edifice of tonal harmony ties in with the theory of the tonnetz. This is a natural question to ask, since at first glance the two systems seem so at odds. But what is the answer? Let us see what we can say about this. The principles of tonal harmony have been set out by many authors. The starting point is usually the choice of a major or minor key based on a set of seven pitch classes, which we label $\{1, 2, 3, 4, 5, 6, 7\}$. Then for $C$ major we have $\{C, D, E, F, G, A, B\}$ and for $C$ minor we have $\{C, D, E_{\flat}, F, G, A_{\flat}, B_{\flat}\}$. Associated with the seven pitches are seven triads, which we label with Roman numerals, I, II, III, IV, V, VI, VII, called scale degrees. 
\begin{figure}[htbp] 
\centering
\includegraphics[clip,scale=0.65]{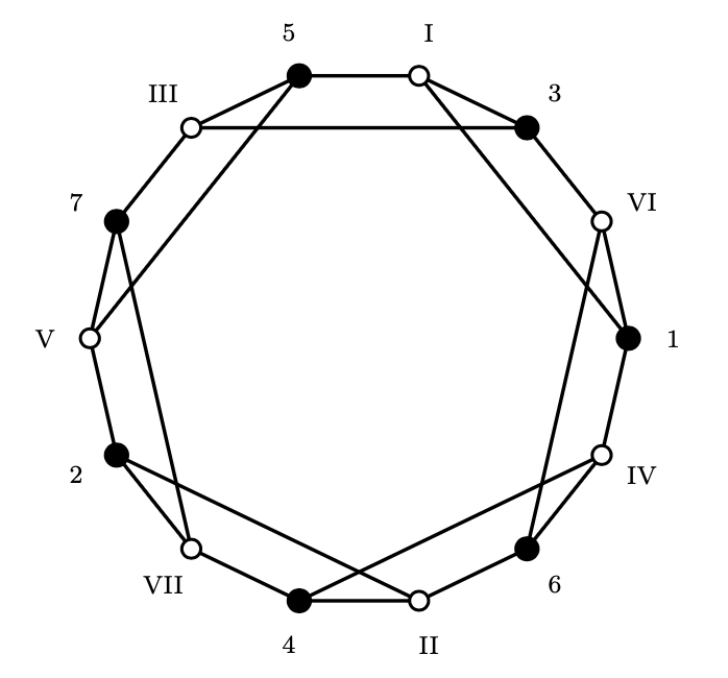}
\caption{Bipartite graph of type $\{7_3\}$ representing the pitch classes (labelled $1, 2, 3, 4, 5, 6, 7$) and triadic degrees (labelled $\rm I,\, II,\, III,\, IV,\, V,\, VI,\, VII$) associated with a diatonic key. Each such degree contains three pitch classes and each pitch class belongs to three triadic degrees.}
\label{Figure AA}
\end{figure}
In the case of $C$ major one sees that I $=\{C, E, G\}$, IV $=\{F, A, C\}$ and V $=\{G, B, D\}$ are major triads, that II $=\{D, F, A\}$, III $=\{E, G, B\}$ and VI $=\{A, C, E\}$ are minor triads, and that VII $=\{B, D, F\}$ is a diminished triad. 

Each  pitch class appears in precisely three of the seven triads. For example, in $C$ major the note $C$ appears in the triads $CM$, $FM$ and $Am$. The relations between the pitches and degrees can be summarized in a biregular graph of degree three in each vertex type with fourteen vertices, as in Figure \ref{Figure AA}. Each triad is joined by edges of the graph to the pitch classes that it includes and each pitch class is joined to the triads that include it.

\begin{figure}[htbp] 
\centering
\includegraphics[clip,scale=0.60]{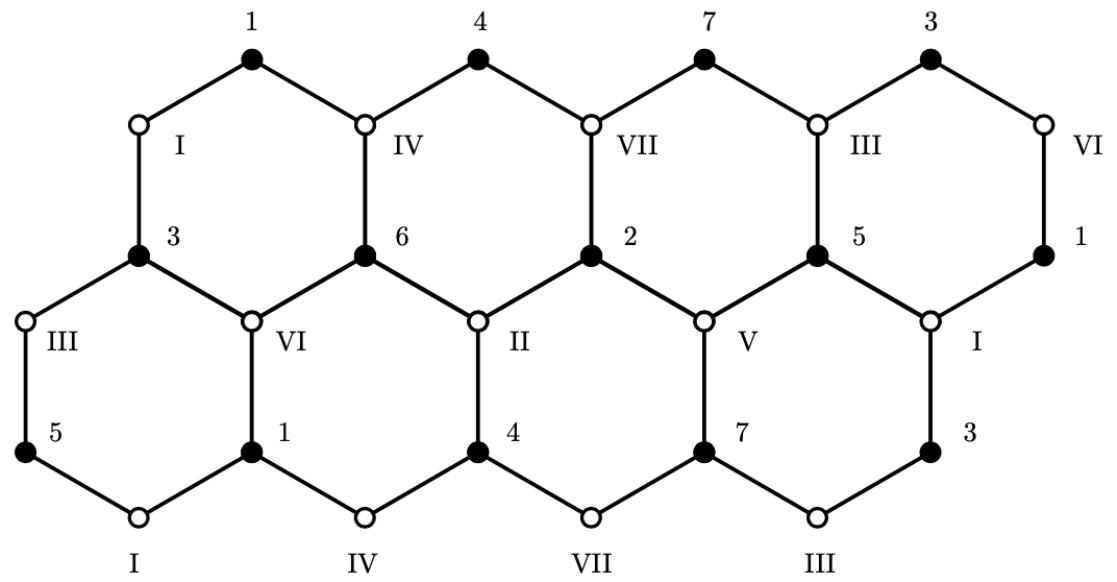}
\caption{The pitch classes and triadic degrees of a diatonic key determine a hexagonal tessellation of the Euclidean plane, of which a fragment is shown. Each degree is surrounded by the three pitches it contains and each pitch is surrounded by the three degrees that contain it.}
\label{Figure DD}
\end{figure}
If a bipartite graph is biregular, we say that the graph is of type  $\{ m_r, n_k \}$ for $m, n, r, k \in \mathbb N$ if there are $m$ white vertices and $n$ black vertices such that $k$ white vertices are connected to each black vertex and $r$ black vertices are connected to each white vertex. 
Then we write $\{ m_r \}$ for $\{ m_r, m_r \}$ and it follows that a graph of type $\{ m_r \}$ is the Levi graph of a combinatorial  configuration $\{ m_r \}$ if and only if its girth is no less than six. 
Figure \ref{Figure AA} depicts a graph of  type $\{ 7_3 \}$ and girth four. It admits tetracycles such as $\langle {\rm I}, 3, {\rm VI}, 1, {\rm I } \rangle$ and $\langle {\rm I}, 3, {\rm III}, 5, {\rm I} \rangle$ and hence is not a Levi graph. These are the only tetracycles containing the triad I. It is interesting to note that (a) in the major mode these tetracycles connect triad I to the minor triads linked to I respectively by a relative transformation and a leading-tone transformation, and (b) in the minor mode these tetracycles connect triad I to the two major triads linked to I by such transformations. We refer to Figure \ref{Figure AA} as {\it the triadic tonnetz of diatonic harmony} since it shows the relations between the seven pitches and the seven triads associated with a given key and mode. 

A corresponding tessellation of the plane can be constructed, shown in Figure \ref{Figure DD}. In both the tessellation  and the bipartite graph the significance of the tetracycles can be seen as follows. The triad I is linked to IV and to V in each case by a single black vertex, whereas III and VI are linked to I in each case by a pair of black vertices. The fact that one can get from III to I and from VI to I by more than one direct route leads to the existence of tetracycles. But there are no direct links between I and II or VII. 
This setup applies to both the major keys and minor keys of tonal harmony.  

Following Levarie (1954) and Schoenberg (1978), we use the natural minor as the basis for our analysis, regarding the harmonic and melodic minors as alterations used where required.
In the key of $C$ minor, one sees that I $=\{C, E_{\flat}, G\}$, IV $=\{F, A_{\flat}, C\}$ and V $=\{G, B_{\flat}, D\}$ are minor triads, that III $=\{E_{\flat}, G, B_{\flat}\}$, VI $=\{A_{\flat}, C, E_{\flat}\}$ and VII $=\{B_{\flat}, D, F\}$ are major triads, and that II $=\{D, F, A_{\flat}\}$ is a diminished triad. The graphs associated with $C$ major and $C$ minor are shown in Figure \ref{Figures BB and CC}. 
 
\begin{figure}[htbp] 
\centering
\includegraphics[clip,scale=0.60]{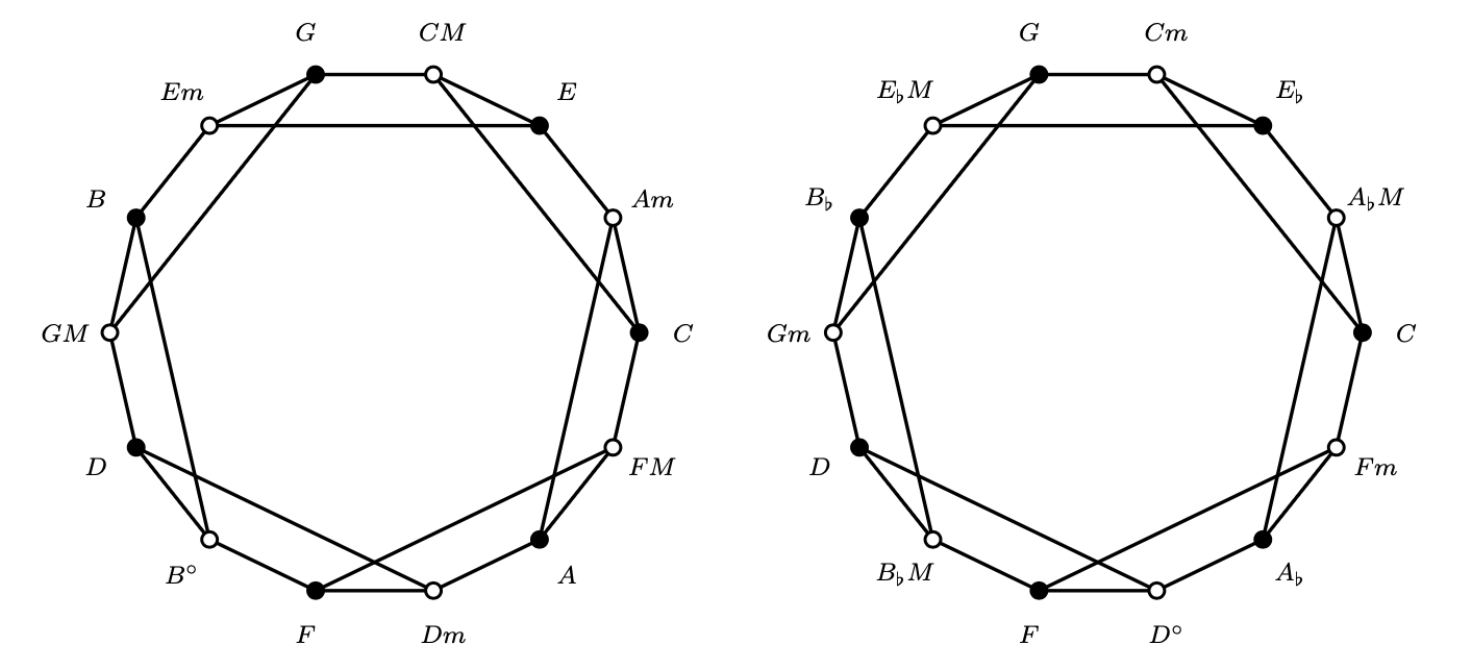}
\caption{In association with the key of $C$ major there are three major degrees (I, IV, V), three minor degrees (II, III, VI) and one diminished degree (VII). The seven triadic degrees together with the seven pitches form a bipartite graph of type $\{7_3\}$ with girth four. If two triads are adjacent to the same pitch, they share that pitch and hence a progression from one triad to the other can be made by pivoting on that pitch.  In the key of $C$ minor, there are three minor degrees (I, IV, V), three major degrees (III, VI,
VII) and one diminished degree (II).}
\label{Figures BB and CC}
\end{figure}

\begin{figure}[htbp] 
\centering
\includegraphics[clip,scale=0.60]{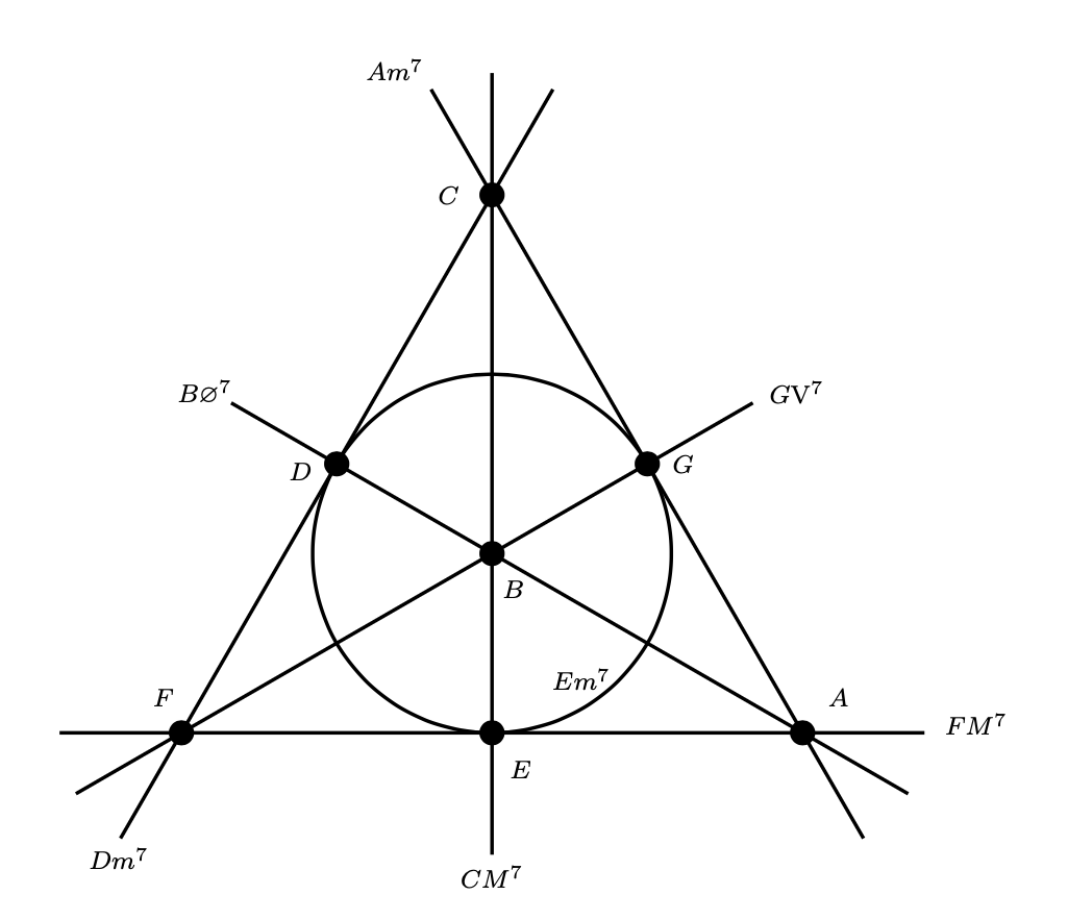}
\caption{A Fano configuration $\{7_3\}$ for the pitch classes and seventh chords of the key of $C$ major. The seven points represent tones and the seven lines, one demarcated by a circle, represent major sevenths, minor sevenths, dominant sevenths and half-diminished sevenths.}
\label{Fano Configuration}
\end{figure}

 \begin{Proposition}
A tonnetz can be constructed for diatonic triads in the form of a biregular graph of type $\{7_3\}$ and girth four, in accordance with which the seven diatonic pitch classes in a given mode are represented by black vertices and the seven associated diatonic triads are represented by white vertices. A corresponding hexagonal tessellation of the plane can be constructed in which the degrees of the triads ascend by fourths moving from left to right.  
\label{Diatonic proposition}
\end{Proposition}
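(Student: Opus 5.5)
The plan is to make everything explicit using scale-degree arithmetic modulo 7. Label the pitch classes of the key by $\mathbb{Z}_7=\{1,\dots,7\}$. The triad on degree $j$ is then
\[
T_j=\{j,\,j+2,\,j+4\} \pmod 7,
\]
and this holds in any diatonic mode, since the triads are stacked diatonic thirds. Let the white vertices be $T_1,\dots,T_7$ and the black vertices the pitch classes, with an edge joining $T_j$ to $p$ whenever $p\in T_j$.

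\emph{Biregularity.} Each $T_j$ has exactly three elements, so every white vertex has degree three. A pitch $p$ lies in $T_j$ exactly when $j\in\{p,\,p-2,\,p-4\}$. So every black vertex also has degree three, and the graph is of type $\{7_3\}$. For example, $C$ lies in I, IV and VI, matching the paper's observation that $C$ belongs to $CM$, $FM$ and $Am$.

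\emph{Girth.} The graph is bipartite, so its girth is even and at least four. I will exhibit the tetracycle $\langle {\rm I},3,{\rm VI},1,{\rm I}\rangle$: here $T_1\cap T_6=\{1,3\}$, two common vertices, which closes a 4-cycle. More generally $|T_j\cap T_{j+2}|=2$, since $T_j$ and $T_{j+2}$ share $j+2$ and $j+4$. Hence the girth is exactly four, and the graph is not a Levi graph. I will also record the full intersection pattern as a check:
\begin{itemize}
\item the difference set $\{0,2,4\}$ gives differences $\pm2$ twice each;
\item it gives $\pm1$ and $\pm3$ once each;
\item it gives no difference $\pm$ other.
\end{itemize}
Consequently $|T_j\cap T_{j+k}|$ equals 2 for $k=\pm2$, equals 1 for $k=\pm1,\pm3$ (in particular for $k=\pm3$, i.e.\ fourths and fifths), and equals 0 for no $k\neq 0$. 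This last case is consistent with the paper's remark about II and VII, which concerns triads with no shared pitch only in the sense of the local picture.

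\emph{Tessellation.} This step is the main obstacle, since it must be checked carefully rather than asserted. I would realize the graph as the quotient of the hexagonal (honeycomb) lattice by a rank-two lattice of translations, i.e.\ embed it on a torus as a map with hexagonal faces. I will check this with Euler's formula. With $V=14$, $E=21$ and $F$ hexagonal faces, the relation $6F=2E$ gives $F=7$, and then $V-E+F=0$, as required for a torus. So the honeycomb universal cover is the claimed tessellation.

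To construct it, I place the white vertices on a triangular sublattice with coordinates $(a,b)$, carrying the triad with degree
\[
j = 1 + 4a + 2b \pmod 7 .
\]
Moving right in $a$ then raises the degree by a fourth, which is the stated property; in letters the degrees run I, IV, VII, III, \dots. Each black hexagon vertex lies among three mutually adjacent white sites. I must choose the orientation so that every such trio is of the form $\{p,\,p-2,\,p-4\}$ in degree labels; the trio $(a,b)$, $(a+1,b)$, $(a,b+1)$ gives degrees $j$, $j+4$, $j+2$, which works. The point placed there is then the common pitch $j+4$.

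Finally I verify consistency: each white site is surrounded by exactly three black vertices carrying its three pitches. This follows because the three triangles around a site realize the three memberships. Periodicity modulo the lattice $\{(a,b): 4a+2b\equiv 0 \pmod 7\}$ then recovers the finite graph, completing the proof in both modes.
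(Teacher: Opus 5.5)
Your overall route is a sound and more systematic version of what the paper does by explicit listing in $C$ major and $C$ minor plus Figure~\ref{Figure DD}: you label degrees by $\mathbb{Z}_7$ with $T_j=\{j,j+2,j+4\}$, count incidences, exhibit $\langle {\rm I},3,{\rm VI},1,{\rm I}\rangle$, and cover the plane by a honeycomb. It does establish biregularity and girth four. However, two computations are wrong.

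First, your intersection ``check'' is false. The six ordered differences of $\{0,2,4\}$ in $\mathbb{Z}_7$ are $\pm2$ (twice each) and $\pm3$ (once each); your list accounts for eight. So $\pm1$ never occurs, and $|T_j\cap T_{j\pm1}|=0$. Triads on adjacent degrees are disjoint, e.g.\ I $=\{C,E,G\}$ against II $=\{D,F,A\}$ and VII $=\{B,D,F\}$. This is precisely the paper's remark that I has no direct link to II or VII, and it is a global fact, not an artefact of a ``local picture.'' Your girth argument does not use this claim. But your version would make every pair of triads share a pitch. That erases exactly the contrast the paper draws with the Fano tonnetz, where all seven seventh chords are mutually linked and the triads are not.

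Second, with $j=1+4a+2b$ a step to the right adds $4$ to the degree, i.e.\ I$\to$V, which is a rise of a fifth. The row actually reads I, V, II, VI, III, VII, IV, not I, IV, VII, III as you assert; a fourth is three scale steps. As written, your degrees ascend by fourths from right to left, so the last clause of the proposition is not yet proved. Take the mirror image instead, for example $j(a,b)=1+3a+5b$:
\begin{itemize}
\item The up-triangle $(a,b),(a+1,b),(a,b+1)$ carries degrees $j,\,j+3,\,j+5$, all of which contain pitch $j$, so place pitch $j(a,b)$ there.
\item The three up-triangles at a white site $(a,b)$, with lower-left corners $(a,b)$, $(a-1,b)$, $(a,b-1)$, then carry $j$, $j+4$, $j+2$. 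This is exactly $T_j$.
\item Quotienting by $\{(a,b): 3a+5b\equiv 0 \pmod 7\}$ returns the finite graph.
\end{itemize}
Also make explicit that black vertices occupy only one class of triangles. Both classes satisfy the common-pitch condition, so that condition is not what selects the orientation. Finally, treat the Euler count $V-E+F=0$ only as a consistency check: it is the explicit covering, not Euler's formula, that proves the tessellation exists.
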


Now let us turn to the matter of seventh chords. In the theory of tonal harmony, seventh chords are clearly important, but are often awarded the status of a kind of enriched triad, useful for adding variety and interest to triadic harmonies, and for strengthening progressions. 
But we have already seen in B-H (2026), in the context of pan-chromatic harmonies, that a system of seventh chords can form a tonnetz of its own, quite distinct from that of the Eulerian tonnetz for triads. Could something like that also be the case when seventh chords are considered in a diatonic context? 
 Let us investigate this point a little further. In tonal harmony, there are exactly seven seventh chords associated with each major key. For example, for the key of $C$ major, these are $\{CM^7, Dm^7, Em^7, FM^7, G{\rm V}^7, Am^7, B\varnothing^7\}$.  
 There are no other seventh chords that can be constructed from the notes of the $C$ major scale apart from the seven listed. 
 
 Here we use the notation $CM^7 = \{C, E, G, B\}$ for a major seventh, $G{\rm V}^7 = \{G, B, D, F\}$ for a dominant seventh, 
 $Dm^7 = \{D, F, A, C \}$ for a minor seventh, and $B\varnothing^7 = \{B, D, F, A \}$ for a half-diminished seventh. It is natural to see whether one can find a three-to-three map between the seven notes of a diatonic key and the seven-member set of seventh chords associated with it. 
 One way of doing this is to map each seventh chord to the three notes forming the underlying triad at the root of each chord. This merely gives back the triadic tonnetz of diatonic harmony that we have already discussed, and hence is uninteresting. 
 
 There is, however, another map that one can consider. This is to map each seventh chord to the tones of  the root, the third and the seventh of the chord. These three tones suffice to uniquely identify the seventh chord within the class of seventh chords associated with the given key. Then each tone of the diatonic key belongs to three such ``attenuated" seventh chords in this class and each such attenuated seventh chord contains three tones. This map gives rise to an interesting tonnetz that is quite distinct from the triadic tonnetz. The resulting structure is that of the Fano configuration $\{7_3\}$. The Fano configuration can be realized geometrically as the projective plane over the binary field $GF(2)$. 
In Figure \ref{Fano Configuration} we show the configuration of notes and chords for the diatonic sevenths of the $C$ major scale in the form of a Fano configuration and in Figure \ref{Figure Heawood} we show the Levi graph of this configuration, the so-called Heawood graph.  
 
 \begin{figure}[!htbp] 
\centering
\includegraphics[clip,scale=0.65]{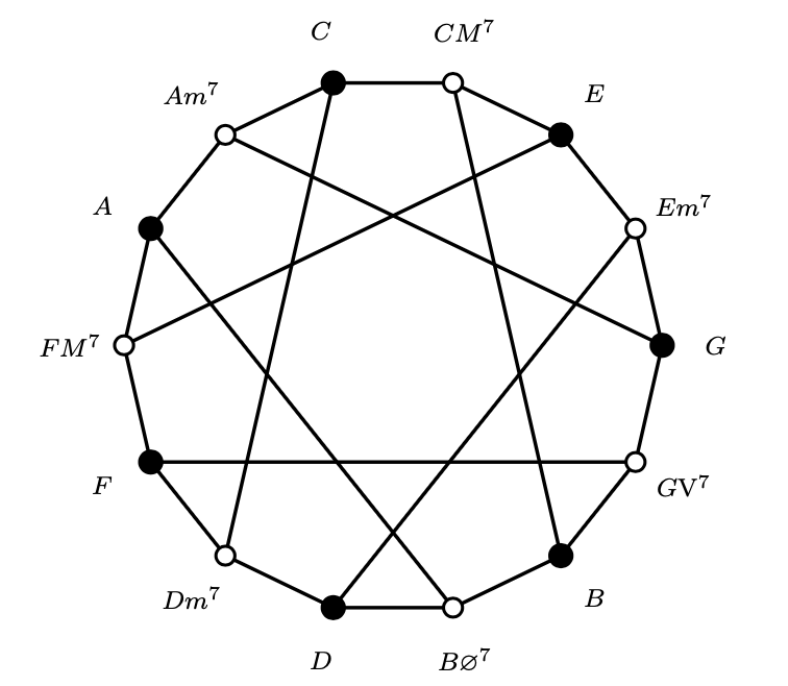}
\caption{The seven pitch classes and seven degrees of the seventh chords associated with the key of $C$ major can be arranged in the form of a Levi graph.  There are twenty-eight hexacycles altogether, falling into four classes, comprising seven ``beanies," seven ``keystones,'' seven ``bow-ties," and seven ``pilgrim hats.''}
\label{Figure Heawood}
\end{figure}

\begin{figure}[!htbp] 
\centering
\includegraphics[clip,scale=0.45]{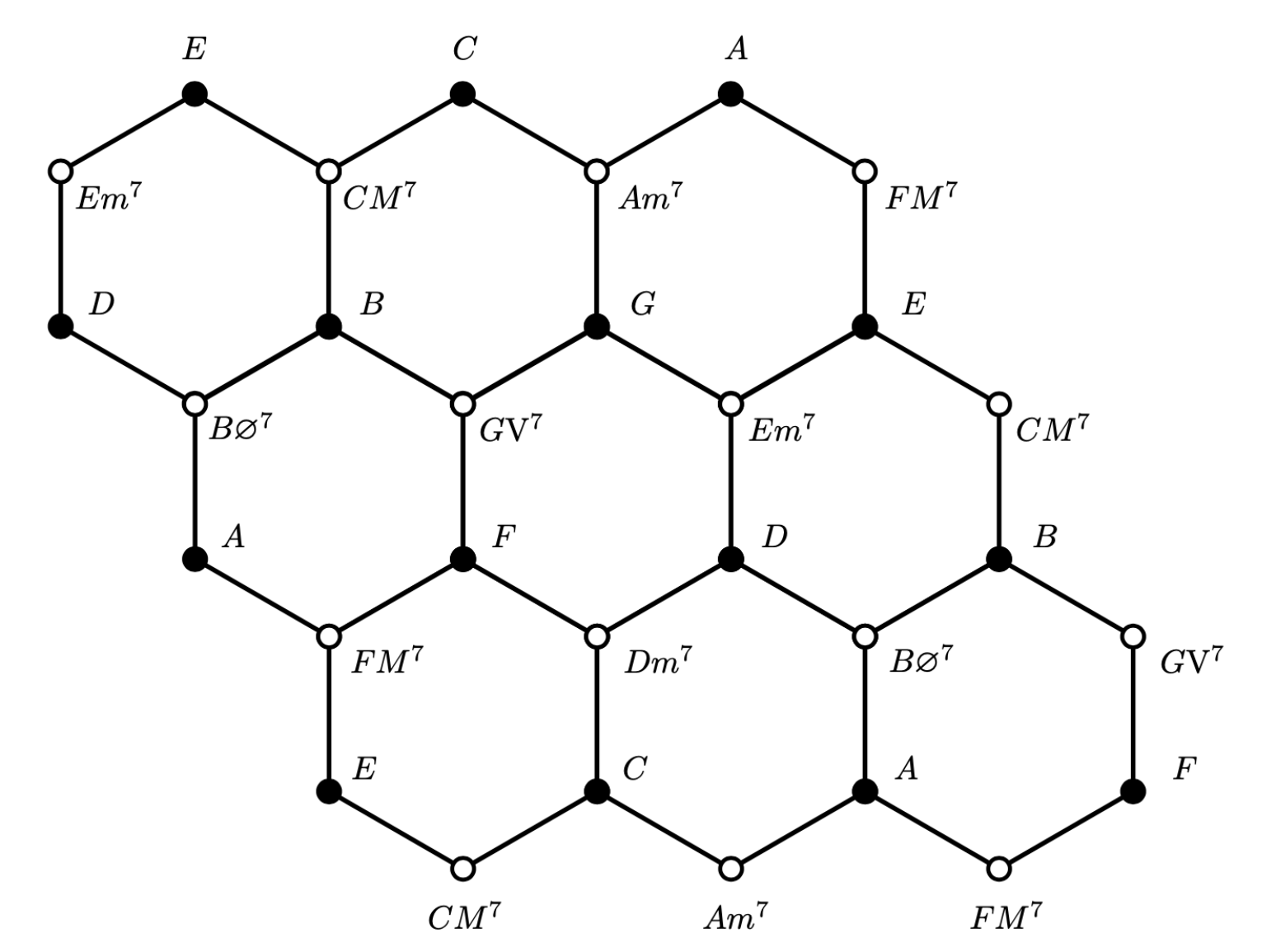}
\caption{Tessellation of the plane by the pitch classes and seventh chords of the $C$ major scale.}
\label{Figure QQ}
\end{figure}

In Figure \ref{Figure QQ} we show the corresponding tessellation.  It is interesting to observe that the seventh chords and the triads of the diatonic scale give rise to distinct tonnetze. This is reminiscent of the way in which the Eulerian tonnetz and the Tristan-genus tonnetz are distinct in the pan-triadic context and give rise to differing families of chord progressions. 

The situation with the seventh chords associated with the minor keys is similar. It will be useful to recall the relevant chords in the case of the key of $C$ minor, which are as follows: $\{Cm^7, D\varnothing^7, E_{\flat}M^7, Fm^7, Gm^7, A_{\flat}M^7, B_{\flat}\rm{V}^7\}$. Similarly, one constructs the Fano configuration, Levi graph, and tessellation for the seventh chords of the key of $C$ minor. 

\begin{Proposition}
A tonnetz can be constructed for diatonic seventh chords in the form of a bipartite graph of type $\{7_3\}$ with girth six that can be identified as the Levi graph of the Fano configuration. The seven diatonic pitch classes of the given mode are represented by black vertices and the associated seven degrees  of diatonic sevenths are represented by white vertices. A corresponding hexagonal tessellation of the plane can be constructed in which the degrees of the sevenths descend by thirds moving from left to right.  
\label{Seventh chord proposition}
\end{Proposition}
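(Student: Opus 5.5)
The plan is to make the three-to-three map explicit in terms of scale degrees and then recognise the resulting incidence structure as a cyclic difference-set design. Label the seven diatonic pitch classes of the given mode by residues $j\in\mathbb Z_7$ (so that $1,\dots,7$ become $0,\dots,6$, ascending by step). The seventh chord on degree $k$ is then $\{k,k+2,k+4,k+6\}$, and its attenuated form, consisting of root, third and seventh, is the block
\[
B_k=\{k,\;k+2,\;k+6\}=k+D,\qquad D=\{0,2,6\}\subset\mathbb Z_7 .
\]
This description is independent of mode, since the major and natural minor keys are rotations of the same diatonic collection, so the same argument covers both $C$ major and $C$ minor. Each $B_k$ has three elements. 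A tone $j$ lies in $B_k$ exactly when $j-k\in D$, so each tone lies in exactly three blocks. Hence the bipartite note--chord graph is biregular of type $\{7_3\}$. The remark in the text that root, third and seventh identify the chord amounts to the fact that the translates $k+D$ are pairwise distinct.

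The key step is to show that $D$ is a perfect difference set modulo $7$. Its nonzero differences are $\pm2,\pm6,\pm4$, which reduce to $\{2,5,6,1,4,3\}$, so every nonzero residue occurs exactly once. Consequently any two distinct tones $a,b$ lie in exactly one common block: the unique representation $a-b=d_1-d_2$ with $d_1,d_2\in D$ forces $k=a-d_1$. It follows that two distinct blocks meet in exactly one tone. The graph therefore has no $4$-cycles, and since it is bipartite its girth is at least six. A $6$-cycle clearly exists, for instance three non-concurrent lines through three pairwise intersection points. So the girth is exactly six, and the graph is the Levi graph of a $\{7_3\}$ configuration. Since $\{7_3\}$ configurations are unique up to isomorphism, or more directly since $\mathbb Z_7$ with difference set $\{0,1,3\}\sim\{0,2,6\}$ is the standard cyclic model of $PG(2,2)$, this configuration is the Fano configuration, and its Levi graph is the Heawood graph.

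For the tessellation I would use the classical embedding of the Heawood graph in the torus as a map of seven hexagons, which I would construct invariant under the $\mathbb Z_7$ action $x\mapsto x+1$ on both colour classes. Lifting this map to the universal cover gives a hexagonal tessellation of the plane, with chords and tones at the vertices. Concretely, I would place the white vertex $B_k$ at lattice positions in a row and choose the lattice so that a unit step to the right sends $k\mapsto k-2$, a descending third. Then I would check that the three neighbours of each white vertex, namely $k$, $k+2$ and $k+6$, and of each black vertex, namely the blocks $j$, $j-2$ and $j-6$, occupy the three adjacent sites of the honeycomb consistently. The step I expect to be the main obstacle is this last verification. It requires choosing the two lattice translations, which act on labels as $k\mapsto k-2$ and $k\mapsto k+c$ for a suitable $c$, so that every hexagonal face closes up: going around any face, the alternating increments drawn from $\pm\{0,2,6\}$ must sum to $0$ mod $7$. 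I would first try the choice in which a face corresponds to a hexacycle of the Heawood graph and solve the resulting linear congruence for $c$.
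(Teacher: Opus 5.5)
Your proposal is correct, and it argues by a different route from the paper. The paper gives no formal argument. It defines the root--third--seventh map, asserts that the resulting incidence structure is the Fano configuration, and exhibits the configuration, the Heawood graph and the tessellation for $C$ major, with $C$ minor done ``similarly''. So the three-to-three property, the girth and the tessellation are all checked by inspecting those drawings.

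Your cyclic model $B_k=k+\{0,2,6\}$ in $\mathbb Z_7$, with $\{0,2,6\}=2\cdot\{0,1,3\}$ a perfect difference set, proves all of this uniformly for both modes. It also explains the contrast with Proposition~\ref{Diatonic proposition}. The triadic block $\{0,2,4\}$ produces the differences $\pm2$ twice and $\pm1$ not at all. That is exactly why tones a third apart share two triads, giving the tetracycles, and why I shares no tone with II or VII. What the paper's explicit figures provide instead is the concrete musical labelling and finer structure, such as the four classes of hexacycles and the toroidal seven-colour remark, which your argument does not address.

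The step you flag as the main obstacle is not really one.
\begin{enumerate}
\item Place white vertices at the lattice points $v$ of the honeycomb and black vertices at $v+\delta$, with white $v$ adjacent to the blacks at $v+\delta$, $v+\delta-a_1$ and $v+\delta-a_2$.
\item Label both $v$ and $v+\delta$ by $\phi(v)$, for a homomorphism $\phi:\mathbb Z a_1+\mathbb Z a_2\to\mathbb Z_7$ with $\phi(a_1)=-2$; this gives the descending third along a row.
\item The only condition is that the neighbour increments $\{0,\,2,\,-\phi(a_2)\}$ equal $\{0,2,6\}$, which forces $\phi(a_2)=1$.
\end{enumerate}
The black vertex $v+\delta$ is then adjacent to exactly the blocks $\phi(v)-\{0,2,6\}$, so the labelling is a covering of the Heawood graph. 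Faces close automatically, because labels depend only on position. Finally, $\ker\phi$ has index $7$, which gives the seven-hexagon torus map.
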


A remarkable feature of the tonnetz for the seventh chords of tonal harmony that is different from the situation with the triads is that the seventh chords are all linked to one another. This can be seen in the tessellation of Figure \ref{Figure QQ}, where one observes that each hexagon is surrounded by six other distinct hexagons. This ensures, for example, that $G{\rm V}^7$ is linked by at least one note to each of $CM^7$, $Dm^7$, $Em^7$, $FM^7$, $Am^7$, and $B\varnothing^7$. The fact that the seventh chords are linked in this way is not merely a curiosity. Rather, it is a manifestation of the fact that the Heawood graph can be embedded without crossings in a torus, which shows that to colour a map on a torus, at least seven distinct colours are necessary (Coxeter 1950). 

In the context of music theory, it is frequently remarked that one of the advantages of the use of seventh chords, apart from those already mentioned, is that one is offered more opportunities for smooth voice leading. One can strengthen the statement somewhat and say that one is guaranteed an opportunity for smooth voice leading from any one seventh chord to any other in the same key simply by holding over a common tone. 

In the case of a pair of triads belonging to a given diatonic scale, one finds that they have two notes in common, or one note, or none. But in the case of the sevenths associated with a given diatonic scale (major or minor) they  have three notes in common, or two, or one.  With this observation in mind, one can undertake a systematic study of progressions in the setting of the diatonic tonnetze. For instance, in a major mode both the progression from I to V in the form (I, VI, IV, II, V) and the return from V to I in the form (V, III, VI, II, V, I) can be charted on the tonnetz of Figure \ref{Figure DD}. See Forte (1979), examples 100, 101. One observes that the triadic tonnetz is indeed a natural medium for these model progressions. If one attempts the same exercise with the progressions from I to V and from V to I in a minor mode, given by (I, VII, III, VI, IV, V) and (V, VI, IV, V, I), respectively, then one checks that there is no minimal representation allowing a continuous movement through the tonnetz. This is because IV and V share no tones, nor do I and VII. But if one embeds these triads within seventh chords of the same degrees, then unique minimal trajectories for both the major and the minor progressions can be found on the Fano tonnetz.

\section{Pentatonic Tone Network}
\label{Pentatonic Tone Network}
\begin{quote} Musical keys and modulations do appear to arouse certain respondent moods or emotions. But is this, primarily, a matter of historical convention, of schooled expectation? What makes a minor third ``sad''? Is G minor, in the Western scale, intrinsically {\em triste} (and just what could such a statement signify?), or does its desolation stem from the use Mozart made of it in his great Quintet K\,516? What of the reflexes of sensibility motivated by keys, pitch, chordal blocks in non-Western tonic systems? Is a pentatonic structure any less universal than ours? 
\hfill ---George Steiner,~{\em Errata} (1997)
\end{quote}
\noindent  According to Piston (1987), ``The pentatonic scale, whose intervallic pattern corresponds to that of the black keys on the piano, is very ancient, having been used in the music of Oriental cultures perhaps longer even than the diatonic scale in the West. It is also the scale used in many European folksongs, particularly those of the British Isles.''  Examples of the use of pentatonic scales can be found in works of well-known modern Western composers such as Bart\'ok, Chopin, Debussy, Mahler, Ravel and Stravinsky. Could there be an analogue of the tonnetz based on a pentatonic scale?  Although rudiments of the system of major and minor triads can be glimpsed in the pentatonic system, nothing like the rich dodecaphonic chord system of Western music exists, so it is not easy to imagine offhand what form a pentatonic tonnetz might take. 

The theory of configurations suggests a way forward. If we have a system of music based on five tones, what kind of combinatorial geometry can be constructed? A hint comes from the work of Cayley (1846) who observed that five points in general position in $\mathbb {R}^3$ determine ten lines and ten planes that meet a generic plane in a configuration of ten points and ten lines such that three points lie on each line and three lines pass through each point (Coxeter 1950). Thus we obtain a $\{10_3\}$, which we can take as our pentatonic tone network. 

The resulting tonnetz has a very curious combinatorial structure. Instead of triads, the analogues of the major and minor chords are clusters of two and three notes, respectively. In pentatonic music these clusters possess a pleasing demeanour. If we take the pentatonic scale to be the notes 
$C,\, D,\, E, \, G, \, A$, the ten two-note clusters are $\{CD\}, \, \{CE\}, \, \{CG\}, \, \{CA\}, \, \{DE\}, \, \{DG\}, \, \{DA\}, \, \{EG\}, \, \{EA\}$, $\{GA\}$, and the ten three-note clusters are $\{ACD\}$,  $\{CDE\}$,  $ \{DEG\}$, $\{EGA\}$,  $\{GAC\}$,   $\{CDG\}$,  $\{DEA\}$,  $\{EGC\}$,  $\{GAD\}$,  $\{ACE\}$. 
The pleasurable effect of such three-note clusters is well known to Western musicians, even if they do not fit comfortably into the scheme of tonal harmony except as suspensions or as attenuated ninths or elevenths. 
For example, $\{ACD\}$ can be viewed as an $Am^{11}$, with some tones removed, and $\{CDE\}$ can be viewed as a thinned-out $CM^9$. 

An example can be found in Bach's Toccata and Fugue in F major for organ, at the entry of the fourth voice in the exposition of the fugue. It seems miraculous that the three upper voices converge to the tone cluster $\{CDE\}$ at the moment that the bass enters in the dominant on the note $C$ two octaves below. Bach makes it sound as if this near collision of the three upper lines is a happy accident. It is but a passing dissonance, achieved by a suspension of the $D$ from the previous measure,  in a series of such suspensions; and yet to those who know the piece it is a memorable moment, a calculated flash of intelligent deliberation on the part of the Maestro.
But the occurrence of such tone clusters via suspensions, even if frequent, tends to be sporadic in the common practice era and mostly for colour and effect rather than logic and syntax. Can one make music from clusters alone?

\begin{figure}[htbp] 
\centering
\includegraphics[clip,scale=0.7]{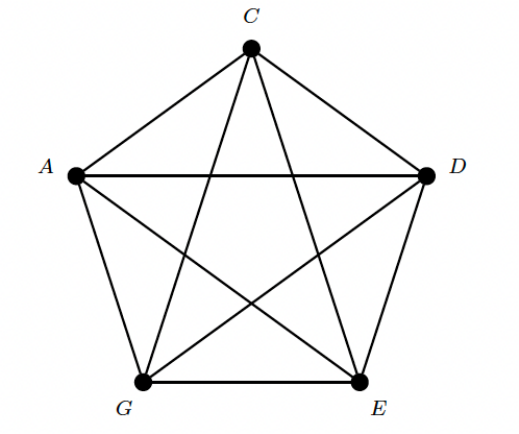}	
\caption{The complete pentagon, determined by five points $C,\, D,\, E, \, G, \, A$ and the ten lines pairwise joining them $CD, \, CE, \, CG, \, CA, \, DE, \, DG, \, DA, \, EG, \, EA, \, GA$, acts as an \textit{aide-m\'emoire} for the incidence relations of the ten lines and the ten triangles $ACD, \,CDE,  \,  DEG,  \, EGA, \,$ $ GAC, \, CDG, \, DEA, \, EGC, \, GAD, \, ACE$. The three lines forming the edges of a triangle are said to be incident with that triangle. Three triangles are then incident with each edge. For example, $CD$, $DE$, $CE$ are incident with $CDE$, whereas $CDA$, $CDG$, $CDE$ are incident with $CD$. } 
\label{pentagram}
\end{figure} 

Cayley's construction and the associated configuration of lines and planes can be easily remembered if one looks at the geometry of the pentacle shown in Figure \ref{pentagram} (cf.~Mason 1977, p.~164). The pentacle includes the five lines forming the pentagon together with five further lines forming the inscribed star pentagon, making a total of ten lines. Additionally, there are ten triangles -- five running around the edge of the pentagon, and five more forming the points of the star. 
On the one hand, each triangle consists of three lines, and on the other hand each line is incident with three distinct triangles. This gives a three-to-three map between lines and triangles, and hence defines an incidence relation between the set of lines and the set of triangles.

The theorem of Desargues is one of the high points of any treatment of  basic projective geometry -- see, e.g.,~that of Coxeter (1974). One of the curious features of this theorem is the fact that the analogous result in three dimensions is to some extent obvious, since the two distinct planes in which the perspectival triangles reside necessarily intersect in a line containing the three pairwise intersections of the edges, whereas in two dimensions the proof  is not so easily forthcoming. 

The resulting scheme of incidence involving the ten points and the ten lines gives rise to the Desargues configuration, which can be viewed both as geometric  and combinatorial. The fact that the Desargues configuration exists in the rational projective plane and that the relevant Diophantine problem has a solution hints at the deep connection this configuration admits with the theory of numbers. 

In fact, the musical labelling of the Desargues configuration shown in Figure \ref{fig:Desargues Configuration} is in some respects more memorable than the traditional geometric labelling. The five tones of the pentatonic scale yield ten two-note ``major'' clusters and ten three-note ``minor'' clusters. The  pitches of these tones may vary from culture to culture even if the combinatorial geometry of the configuration is invariant. One could even consider a form of pentatonic equal temperament by setting $C = 1$, \,$D = 2^{1/5}$, \,$E = 2^{2/5}$,  \,$G = 2^{3/5}$, \,$A = 2^{4/5}$. 
We refer to the resulting clusters of two and three tones as the ``chords'' of pentatonic music. Each major chord can be found in three minor chords and each minor chord contains three major chords.
For example, the major chord $CD$ can be found in the three minor chords $CDE$, $CDG$, and $CDA$, whereas the minor chord $CDE$ contains the major chords $CD$, $CE$, and $DE$, as one sees in Figure \ref{fig:Desargues Configuration}. Here, to ease the notation we drop the braces around unordered chords.
Thus we obtain a three-to-three map between the major chords and the minor chords. 

\begin{figure}[htbp]
\centering
\includegraphics[clip,scale=0.73]{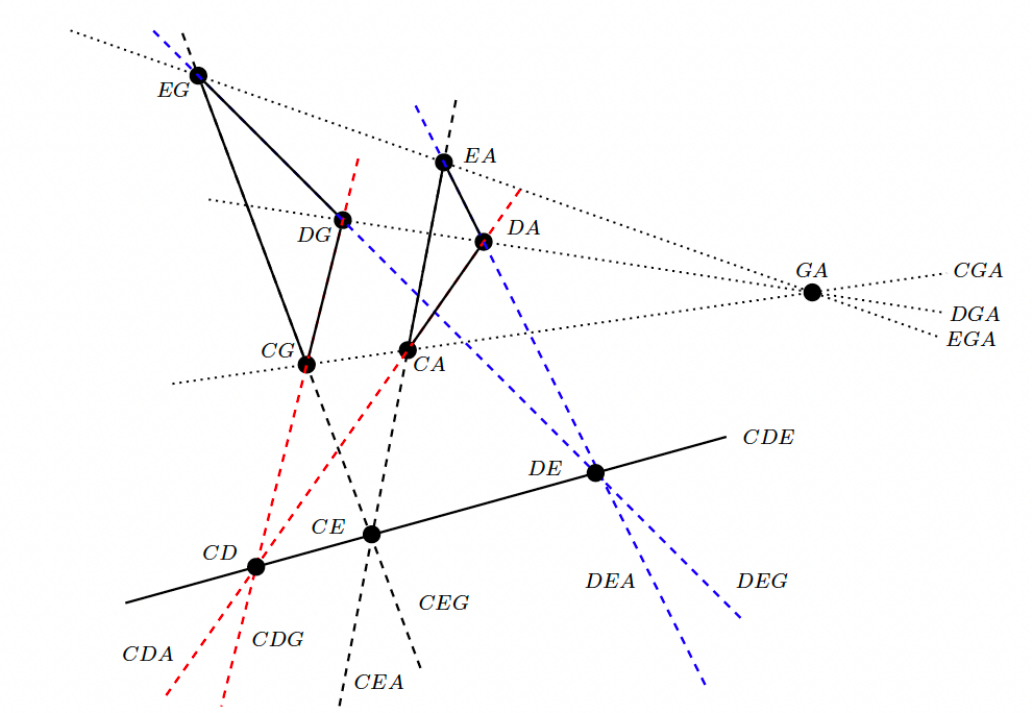}
\caption{The Desargues configuration as a pentatonic tonnetz. The triangles $(EG, DG, CG)$ and $(EA, DA, CA)$ are in perspective from the point $GA$. The lines $CEA$ and $CEG$ meet at the point $CE$, whereas $CDG$ and $CDA$ meet at $CD$, and the lines $DEA$ and $DEG$ meet at $DE$. The points $CD$, $CE$ and $DE$ are incident with the line of perspective $CDE$.}
\label{fig:Desargues Configuration}
\end{figure}

\begin{Proposition}
A tonnetz can be constructed for pentatonic music in the form of a self-dual $\{10_3\}$ in $\mathbb {R}^{2}$ known as the Desargues configuration, in accordance with which $10$ two-note ``major'' chords are represented by points and $10$ three-note ``minor'' chords are represented by lines. The $30$ incidence relations between the $10$ points and the $10$ lines determine the edges of the corresponding Levi graph.  
\label{Pentatonic proposition}
\end{Proposition}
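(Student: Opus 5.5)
To prove Proposition \ref{Pentatonic proposition}, I would split it into a combinatorial claim and a geometric claim. Let $S=\{C,D,E,G,A\}$. Take as points the ten 2-subsets of $S$ and as lines the ten 3-subsets. A point is incident with a line exactly when the 2-subset is contained in the 3-subset.

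\emph{Biregularity and the count of 30.} A 3-subset has $\binom{3}{2}=3$ two-element subsets. A 2-subset extends to a 3-subset in exactly $5-2=3$ ways, by adjoining one of the remaining three tones. So the incidence structure is biregular of type $\{10_3\}$, and its Levi graph has $10\cdot 3=30$ edges.

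\emph{It is a configuration.} Two distinct 2-subsets lie together in at most one 3-subset, namely their union, and only when they share a tone. Dually, two distinct 3-subsets share at most one 2-subset, since their intersection has at most two elements. Hence the Levi graph has no tetracycle, so its girth is at least six, and by the criterion recalled in Section \ref{sec:Tonal Harmony} it is the Levi graph of a combinatorial configuration $\{10_3\}$.

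\emph{Self-duality.} The complement map $X\mapsto S\setminus X$ sends 2-subsets to 3-subsets and 3-subsets to 2-subsets. It reverses inclusion: $P\subset L$ if and only if $S\setminus L\subset S\setminus P$. It is therefore an incidence-preserving bijection from points to lines and from lines to points, that is, a duality. In musical terms, the major chord $CD$ corresponds to the minor chord $EGA$.

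\emph{Identification with Desargues and realization in $\mathbb R^2$.} I would follow Cayley's construction. Choose five points $p_C,\dots,p_A$ in general position in $\mathbb R^3$, together with a generic plane $\Pi$. Send each 2-subset $\{x,y\}$ to the point where the line $p_xp_y$ meets $\Pi$. Send each 3-subset $\{x,y,z\}$ to the line where the plane $p_xp_yp_z$ meets $\Pi$. Containment of a 2-subset in a 3-subset means the line $p_xp_y$ lies in the plane $p_xp_yp_z$, so every combinatorial incidence becomes a geometric incidence. I also need genericity: distinct subsets must give distinct points and lines, and there must be no extra incidences. This follows from general position together with choosing $\Pi$ to avoid finitely many bad positions. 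A concrete rational choice, such as the four coordinate points and $(1,1,1)$ with a suitable plane, would let one verify this directly.

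To recognize the result as Desargues, I would exhibit the perspectivity named in Figure \ref{fig:Desargues Configuration}. The triangles $(CG,DG,EG)$ and $(CA,DA,EA)$ are in perspective from $GA$, because $GA$, $xG$ and $xA$ lie on the line $xGA$. Corresponding sides meet at $CD$, $CE$ and $DE$, which lie on the line $CDE$. This is precisely Desargues' incidence pattern. By the symmetry of $S_5$ the same holds for every choice of centre.

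The main obstacle is making the genericity argument rigorous, that is, ruling out accidental coincidences. The fallback is to give explicit coordinates and check the ten points and ten lines by hand.
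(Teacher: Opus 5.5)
Your proposal is correct and follows essentially the paper's own route: the three-to-three inclusion map between two-note and three-note clusters read off the pentacle, Cayley's section of five general points in $\mathbb{R}^3$ by a generic plane, and the perspectivity from $GA$ drawn in the paper's Desargues figure to identify the result as Desargues. Your tetracycle check and the complement map $X\mapsto S\setminus X$ usefully make explicit the girth-six and self-duality claims that the paper only asserts, and your genericity conditions on the cutting plane are routine finitely-many-exclusions arguments rather than a real obstacle.
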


Figure \ref{fig:DesarguesLevi} shows on the left how the chords of the pentatonic tone network can be arranged in the form of a Levi graph, of which another view can be found on the right in which some of the other cycles can be easily discerned (Coxeter 1950, Pisanski and Servatius 2013). What is remarkable is the overall affinity of the underlying combinatorial geometries of the pentatonic tonnetz and the Eulerian tonnetz. The results are summarized in Proposition \ref{Pentatonic proposition}. 

Looking at the Desargues configuration in Figure \ref{fig:Desargues Configuration} one might be tempted to assign special significance to the point of perspective marked by $GA$. But this is illusory -- any of the ten points can be taken as a point of perspective, from which the two associated triangles can be identified as well as the line of perspective. Once the symmetry is broken by the choice of a point of perspective the other points line up to it in a specific way. The choice of a chord, such as $GA$, as the point of perspective is analogous to a choice of key. No one choice is preferred {\em a priori}, but once a choice has been made, the music associated with that chord fixes relations between that chord and the other chords. 

\begin{figure}[htbp] 
\centering
\includegraphics[clip,scale=0.7]{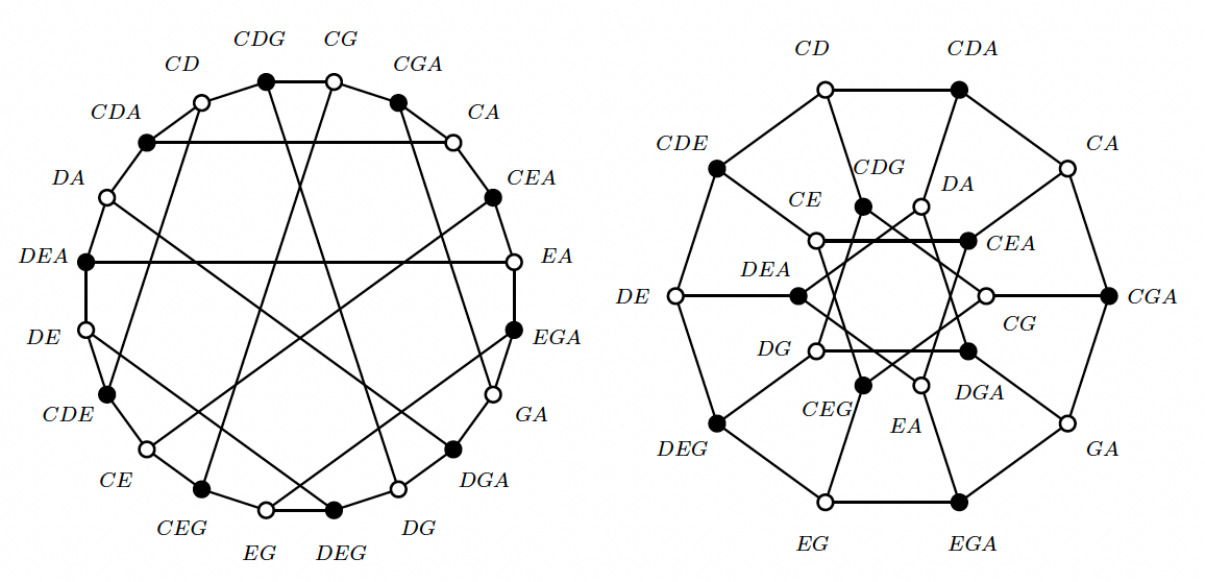}
\caption{The pentatonic tonnetz is represented here in two isomorphic ways as  the Levi graph of the Desargues configuration. Each two-note chord belongs to three distinct three-note chords and each three-note chord contains three distinct two-note chords. The girth of the graph is six, pointing to the existence of hexacycles, for example  $\langle CDA, \,CD, \,CDG, \,CG, \,CGA, \,CA, \,CDA \rangle$. The graph is manifestly self-dual. One also sees that there is a perimeter Hamiltonian cycle embracing all twenty major and minor chords. In fact, there are twenty-four such icosacycles. On the right, the twenty hexacycles of the pentatonic tonnetz are clearly visible.}
\label{fig:DesarguesLevi}
\end{figure}

For any system of music with an odd number of tones per octave, with or without equal temperament, the combinatorics of the pentatonic system can be extended as follows.

\vspace{0.20cm}

\begin{Proposition}
Let a scale contain an odd number $m = 2 k + 1$ of distinct tones, $k \in \mathbb N$. Then there are $m \choose k$ collections of $k$ tones, which we call $k$-chords,  that can be formed from the scale and there are $m \choose k+1$ = $ m \choose k$ collections of 
$k+1$ tones, which we call $(k+1)$-chords. Every $k$-chord is contained in $k+1$ \,$(k+1)$-chords and every $(k+1)$-chord contains $k+1$ \,$k$-chords. There exists a $k+1$ to $k+1$ map between the  $k$-chords and the $(k+1)$-chords and the associated biregular graph contains no tetracycles. Hence, if we write $a$ = $m \choose k$ and $b = k+1$, the construction yields a self-dual combinatorial configuration of type $\{a_b\}$. 
\label{Generalization proposition}
\end{Proposition}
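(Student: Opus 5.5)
The plan is to realise the structure as the middle layer of the Boolean lattice of subsets of an $m$-set, and then check the configuration axioms one by one. Write $S$ for the scale, with $|S| = m = 2k+1$. The points are the $k$-subsets of $S$ and the lines are the $(k+1)$-subsets, with incidence given by set inclusion.

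\textbf{Counting.} The identity ${m \choose k} = {m \choose m-k} = {m \choose k+1}$ holds because $m - k = k+1$. So there are equally many points and lines, namely $a$.

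\textbf{Regularity.} A $k$-chord $X$ lies in exactly those $(k+1)$-chords of the form $X \cup \{t\}$ with $t \in S \setminus X$. There are $m - k = k+1$ of these. Dually, a $(k+1)$-chord $Y$ contains exactly those $k$-chords of the form $Y \setminus \{t\}$ with $t \in Y$, and there are $k+1$ of these. Hence the bipartite inclusion graph is biregular of type $\{a_b\}$ with $b = k+1$, and it has $ab$ edges. This gives the $k+1$ to $k+1$ map.

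\textbf{No tetracycles.} This is the only point needing an argument. Suppose two distinct $k$-chords $X_1, X_2$ were both contained in two distinct $(k+1)$-chords $Y_1, Y_2$. Then $X_1 \cup X_2 \subseteq Y_1 \cap Y_2$. Since $X_1 \neq X_2$ have equal size, $|X_1 \cup X_2| \geq k+1$. On the other hand, $Y_1 \neq Y_2$ have equal size, so $|Y_1 \cap Y_2| \leq k$. This is a contradiction. Equivalently, two distinct $k$-chords have at most one common $(k+1)$-superset, namely $X_1 \cup X_2$, which exists only when $|X_1 \cap X_2| = k-1$. So the girth is at least six. By the criterion stated earlier in Section \ref{sec:Tonal Harmony}, a graph of type $\{m_r\}$ with girth at least six is the Levi graph of a configuration, so we obtain a configuration $\{a_b\}$.

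\textbf{Self-duality.} Use complementation $X \mapsto S \setminus X$. It is a bijection from $k$-subsets to $(k+1)$-subsets and back. It reverses inclusion: $X \subseteq Y$ if and only if $S \setminus Y \subseteq S \setminus X$. Therefore the map sending each point $X$ to the line $S \setminus X$, and each line $Y$ to the point $S \setminus Y$, is an incidence-preserving bijection of the configuration onto its dual. In other words, it is an automorphism of the Levi graph that swaps the two colour classes.

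\textbf{Checks.} For $k = 2$ the construction recovers Proposition \ref{Pentatonic proposition}: $a = 10$ and $b = 3$, giving the Desargues configuration. For $k = 1$ it gives the triangle $\{3_2\}$, where the girth condition still holds because the Levi graph is a hexagon.

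I expect no real obstacle. The only step needing care is the tetracycle exclusion, and the cardinality comparison above settles it.
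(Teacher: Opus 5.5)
Your proof is correct and follows the same outline as the paper. The paper gives no separate argument: the statement itself asserts the binomial count, the $(k+1)$-regularity on both sides and the absence of tetracycles, and then concludes. Your cardinality bound $|X_1\cup X_2|\ge k+1>k\ge|Y_1\cap Y_2|$ proves the tetracycle step. Your complementation map $X\mapsto S\setminus X$ gives the actual reason for self-duality, which the paper's ``Hence'' leaves unjustified, since equal point and line parameters alone do not make a configuration isomorphic to its dual.
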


\vspace{0.20cm}

Thus, we have a family of tonnetze based on sets of odd cardinality. If $m = 5$ we recover the pentatonic tonnetz. For a heptatonic scale ($m = 7$) one obtains a configuration $\{ 35_4 \}$ of 3-note and 4-note chords. For $m = 9$, we get a $\{ 126_5 \}$ of 4-note and 5-note chords. Even the case $k=1$ is interesting, which gives rise to a scale with three tones, together with a system of three one-tone major chords and three two-tone minor chords, for which the associated Levi graph consists of a single hexacycle.  

The pentatonic tonnetz can be used as a compositional tool. In Figure 
\ref{Fig:On_the_Perimeter} we give an example of a composition based on this tonnetz. On the left side in Figure \ref{fig:DesarguesLevi} one observes that there are five overlapping ``beanie"-style hexacycles running around the perimeter Hamiltonian cycle of the graph. The composition involves playing the chord sequence of each such hexacycle twice. 
 
 \begin{figure} [htbp]
 \centering
\includegraphics[scale=0.50]{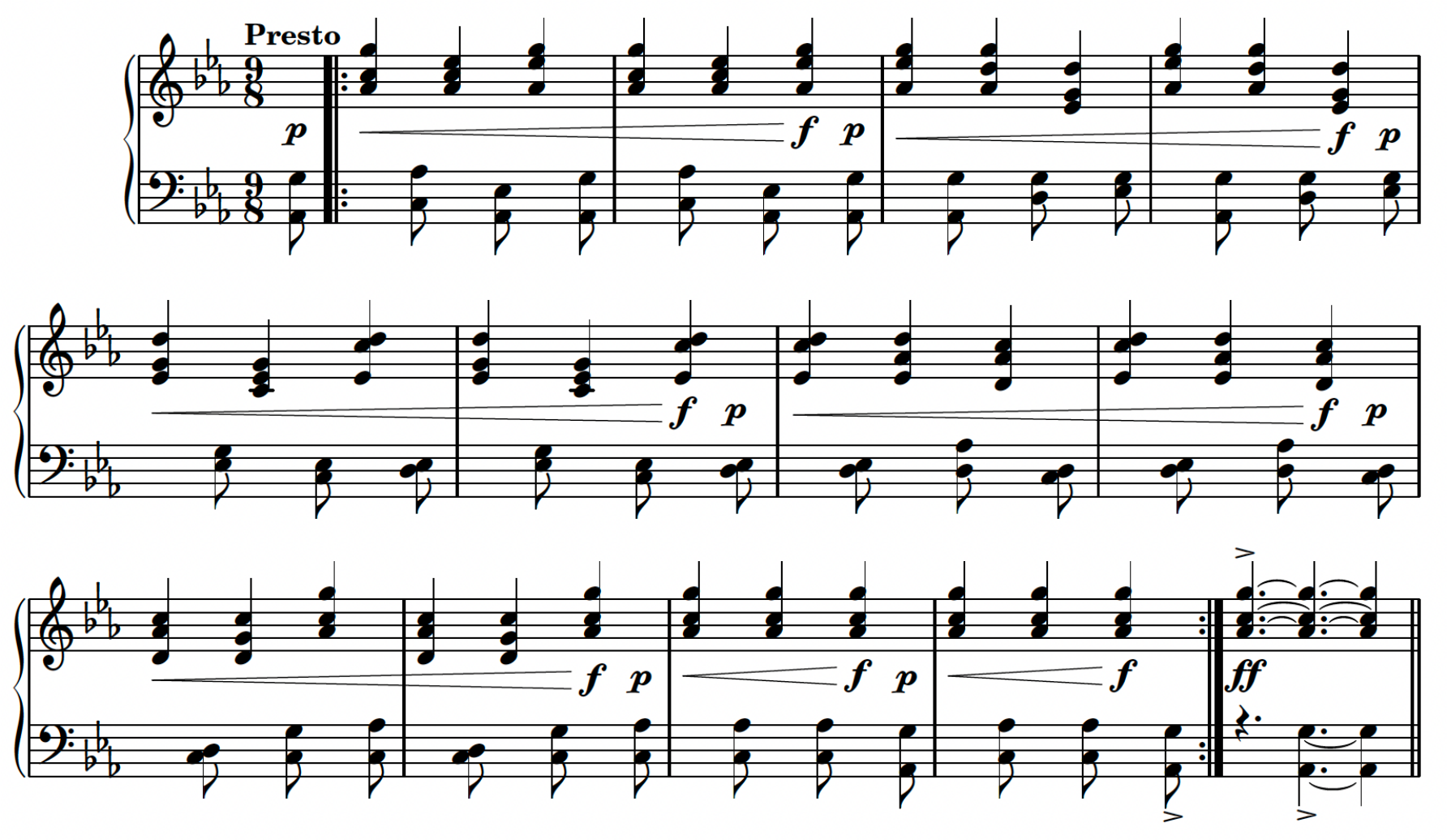}
\caption{A simple composition, ``On the Perimeter,'' based on a Hamiltonian cycle of the pentatonic tonnetz, using 
a Japanese pentatonic scale with the five notes $C$, $D$, $E_{\flat}$, $G$, $A_{\flat}$. The Hamiltonian 0$p$-icosacycle on the left side of Figure \ref{fig:DesarguesLevi} is represented in this piece by the five overlapping 1$p$-hexacycles of the pentatonic tonnetz, each played twice. See Appendix  B for a complete tabulation of the cycles of the Desargues tonnetz.}
\label{Fig:On_the_Perimeter}
\end{figure}

\newpage
\section{Remarks on the Twelve-Tone System}
\label{Remarks on the Twelve-Tone System}

\noindent The twelve-tone system developed by Schoenberg, Webern, Berg and their followers in the Second Viennese School provided an approach to composition that was disciplined and flexible and in the right hands capable of embodying a full range of musical expression. Yet, apart from the general attributes of intelligence and musicality held in common with other such great works of art, the character of the music thus produced is so different from that of the preceding generations that, even if one allows for the extremes of extended tonality, we are tempted to speculate that fundamentally different structural principles are in operation, not merely tweaks of the {\em ancien r\'egime}. Could it be that another tone network is involved? 

The role of unordered hexachords in twelve-tone music has long been recognized. For example,   Lewin (1967) approaches the structure of Schoenberg's Violin Fantasy with a study based on the unordered hexachord $\{F, G, A, B_{\flat}, B, C_{\sharp}\}$ and its complement $\{C, D, D_{\sharp}, E, F_{\sharp}, G_{\sharp}\}$.  
Unordered hexachords also played a prominent role in the work of J. M. Hauer, whose techniques anticipated various aspects of those of Schoenberg. It is not unreasonable to take the view that in any approach to the theory of twelve-tone music the combinatorial geometry of such unordered sextuplets should be investigated in the same spirit and with the same rigour with which we have pursued other tonnetze.

Our musical starting point is a hexachord $H_0 = \{1, 2, 3, 4, 5, 6\}$ -- that is, an unordered set of six distinct tones taken from the chromatic scale with equal temperament, whose elements we have labelled with the numbers one to  six.  These numbers are not, of course, the pitch classes of the six tones; their role is purely combinatorial. For example, in the first hexachord mentioned above, we set $1 = F$, $2 = G$, $3 =  A$, $4 = B_{\flat}$, $5 = B$, $6 =C_{\sharp}$. 
At first glance, such an assignment might seem structureless; but  the permutation group for six objects is  rather special and the highly nontrivial structure of this group is directly related to the particular type of tone network that we wish to propose for twelve-tone music.
The method that we consider is based on Sylvester's  theory of {\em duads} and {\em synthemes} (Sylvester 1844, 1861).  
A duad is an unordered pair of distinct tones taken from $H_0$. 
A little thought shows there are fifteen duads. Thus, a duad $D$ is an unordered set of the form 
\begin{eqnarray}
D = \{i,j\} \,\, {\rm with} \,\,  i,j \in H_0, \, i \neq j. 
\end{eqnarray}
By a syntheme we mean an unordered triple of duads taken in such a way that each element of $H_0$ appears exactly once as one of the elements of the three duads. Hence, a syntheme (Sylvester's term, from the Greek \begin{greek}c<un t'ijhmi\end{greek}) is a set of the form
\begin{eqnarray}
S = \{\{i,j\}, \{k,l\}, \{m,n\} \} \,\, {\rm with} \,\, i,j, k, l, m, n \in H_0, {\rm\, pairwise\,\, distinct}. 
\end{eqnarray}

There are fifteen synthemes altogether. Now, each syntheme contains three duads. But it is also the case that each duad is contained in three distinct synthemes. Thus $\{i,j\}$ is contained in  
$\{\{i,j\}, \{k,l\}, \{m,n\}\}$, $\{\{i,j\}, \{k,m\}, \{n,l\}\}$ and $\{\{i,j\}, \{k,n\}, \{l,m\}\},$ where $i, j, k, l, m, n$ are pairwise distinct. 
 
Hence, we have constructed a combinatorial configuration. More precisely, we have two sets, viz., the set of duads and the set of synthemes. Each has fifteen elements and they are in three-to-three correspondence. We say that the duad $D$ is incident with the syntheme $S$ if it is among the three duads contained in $S$. Inspection of Tables II and IV shows that two synthemes have at most one duad in common. 
This gives us an incidence relation and we can construct the corresponding Levi graph. 

\begin{figure}[htbp]
\centering
\includegraphics[clip,scale=0.83]{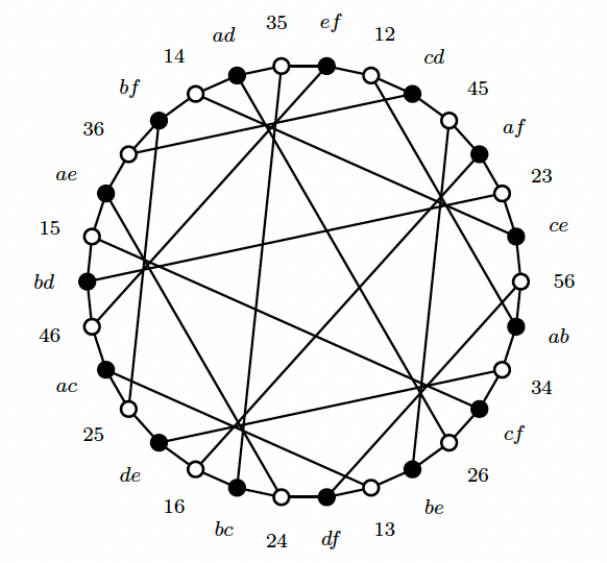}
\caption{The Levi graph of the Cremona-Richmond configuration $\{15_3\}$ maps to a tonnetz for the system of twelve-tone music based on an unordered hexachord. Each white vertex represents a duad (number pair) and each black vertex represents a syntheme (letter pair). Each duad belongs to three synthemes and each syntheme contains three duads.}
\label{fig:Levi graph of configuration 15_3}
\end{figure}

There is one more layer of structure, which comes as a surprise, and this is the following. Consider a collection of five synthemes with the property that each duad appears exactly once. This makes sense since there are fifteen duads and a collection of five synthemes must include fifteen duads. A collection of five ``non-overlapping''  synthemes is called a  ``total''  (again, Sylvester's term). 
There are six such totals that can be constructed. 
Let us label the six totals with lower case Roman letters. Thus, the totals are elements of the set $\{a, b, c, d, e, f\}$. Since we have a second set of six elements, we can form the corresponding ``duads'' and ``synthemes'' for this set as well. 
We shall call these letter duads and letter synthemes, respectively. 

Thus, a letter syntheme is an unordered triple of letter duads. Then by a  letter total we mean a collection of five non-overlapping letter synthemes. 
Two distinct totals have one syntheme in common. Hence, the letter duads of the totals are the original synthemes; and it follows that the letter synthemes are the original duads. 
The dual relations between duads, synthemes, totals, letter duads, letter synthemes, and letter totals can be seen in Tables I to IV in Appendix A (Richmond 1900, Baker 1925, Coxeter 1950, 1958). 


\begin{figure}[htbp] 
\centering
\includegraphics[clip,scale=0.83]{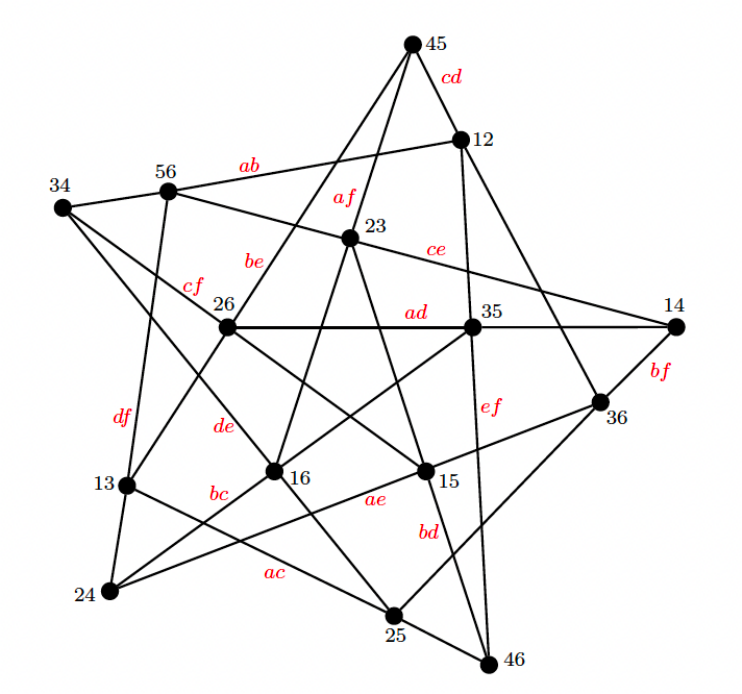}		
\caption{The tonnetz corresponding to an unordered hexachord maps to a configuration $\{15_3\}$ in $\mathbb {R}^2$ consisting of fifteen points and fifteen lines having the property that three points lie on each line and three lines pass through each point. The points correspond to duads of tones and the lines correspond to unordered triples of duads.} 
\label{configuration 15_3}
\end{figure} 


The advantage of using the number duads and the letter duads 
is that this allows us to label the resulting Levi graph neatly, where the number duads, which correspond to letter synthemes, are represented by white vertices and the letter duads, which correspond to number synthemes, are represented by black vertices. 
One sees that there is indeed a rich hierarchical structure implicit in the specification of a hexachord.  
The Levi graph of the Cremona-Richmond $\{15_3\}$ is shown in Figure \ref{fig:Levi graph of configuration 15_3} (Tutte 1947, Coxeter 1950, Pisanski and Servatius 2013, Brier and Bryant 2021). We can take this graph as representing a tonnetz for the twelve-tone system when an unordered hexachord has been selected as a primitive. Its girth is eight: there are no hexacycles.

A construction of the  Cremona-Richmond $\{15_3\}$ is as follows. We find it convenient here to use the language of projective geometry. We begin with six points $p_1, .\,.\,., p_6$ in general position in  $\mathbb{RP}^4$. When joined pairwise, these points determine fifteen  lines. 
Let us write $L_{12} = p_1\vee p_2$ for the line joining $p_1$, $p_2$, and $H_{3456} = \vee (p_3 p_4 p_5 p_6)$ for  the complementary hyperplane joining $p_3$, $p_4$, $p_5$, $p_6$. This hyperplane is an $\mathbb{RP}^3$. 
Each of the fifteen lines meets its complementary hyperplane at a point. 
Thus,  we can write $P_{12} = L_{12} \wedge H_{3456}$ for the meet of $L_{12}$ and  $H_{3456}$. 

There are evidently fifteen such ``meeting points.'' 
On the other hand, for example, we also have 
$P_{34} = L_{34} \wedge H_{1256}$ and $P_{56} = L_{56} \wedge H_{1234}$. 
Let us write $S_{12,34,56}$ for the line given by the meet of the three hyperplanes. Since $H_{1234}$ and $H_{1256}$ both contain $L_{12}$ it follows that the plane of intersection $D_{12}$ of these two hyperplanes contains $L_{12}$. 

Then  since $D_{12}$ meets $ H_{3456}$ at $S_{12,34,56}$ it must be that $L_{12}$ meets $S_{12,34,56}$ at  $P_{12}$. By symmetry, all three of the ``duad'' meeting points $P_{12}$, $P_{34}$, $P_{56}$ lie on the ``syntheme'' line  $S_{12,34,56}$.  
The fifteen meeting points are the duads of the configuration; and the fifteen lines of three-at-a-time hyperplane intersection, each of which contains three of the duads, are the synthemes of the configuration. 
That gives the four-dimensional version of the construction, which can then be projected to a plane to provide a $\{15_3\}$ in two dimensions,  the Cremona-Richmond configuration. 

Once the result has been obtained on the real projective plane, it can be Euclideanized and that leads us to Figure \ref{configuration 15_3}. For further details see Richmond (1900),  Baker (1925), and Coxeter (1950). 

\vspace{0.20cm}
\begin{Proposition}
A tonnetz can be associated with the selection of an unordered hexachord in twelve-tone music, given by a self-dual $\{15_3\}$ in $\mathbb {R}^{2}$ known as the Cremona-Richmond configuration.  In this tonnetz, the $15$ ``major'' duad chords are represented by points and the $15$  ``minor'' syntheme chords are represented by lines. The $45$ incidence relations between the $15$ points and the $15$ lines determine the edges of the corresponding Levi graph.
\label{prop:15_3 proposition}
\end{Proposition}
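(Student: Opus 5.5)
We prove the statement in three steps. First we show that duads and synthemes form a combinatorial configuration $\{15_3\}$. Then we show that this configuration is self-dual. Finally we show that it is realized by points and lines in $\mathbb{R}^2$.

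\emph{Combinatorial configuration.} There are $\binom{6}{2}=15$ duads. The number of synthemes is the number of perfect matchings of $H_0$, which is $5\cdot 3\cdot 1=15$. Each syntheme contains three duads by definition. Each duad $\{i,j\}$ lies in exactly three synthemes, one for each perfect matching of the remaining four elements $\{k,l,m,n\}$. So the incidence structure is biregular of type $\{15_3\}$, and it has $15\cdot 3=45$ flags. These flags are the edges of the Levi graph.

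It remains to check that the Levi graph has girth at least six, i.e.\ that two distinct duads lie in at most one common syntheme. Two disjoint duads $\{i,j\},\{k,l\}$ determine the unique third duad $\{m,n\}$, hence a unique syntheme. Two overlapping duads lie in no syntheme at all. This rules out tetracycles, so we have a genuine configuration. We can also record that the girth is in fact eight. A hexacycle would need three pairwise disjoint duads that pairwise share a syntheme but do not all lie in one syntheme. Three pairwise disjoint duads from a six-set, however, form a syntheme themselves, so no such hexacycle exists.

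\emph{Self-duality.} We use Sylvester's totals as described above. There are six totals $a,\dots,f$, and any two totals share exactly one syntheme. This gives a bijection between synthemes and letter duads: both sets have 15 elements, and the map is injective because distinct pairs of totals share distinct synthemes.

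Under this bijection, the three synthemes containing a fixed duad $\{i,j\}$ correspond to three letter duads. We must show that these three letter duads are disjoint, so that they form a letter syntheme. This is the combinatorial heart of the argument, and the step I expect to be the main obstacle. I would verify it by direct inspection of Tables I--IV. A more conceptual route is the outer automorphism of $S_6$, which exchanges duads with synthemes and totals with points.

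Once this is established, the map duad $\mapsto$ letter syntheme and syntheme $\mapsto$ letter duad is an anti-automorphism of the incidence structure. Hence the configuration is self-dual.

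\emph{Realization in $\mathbb{R}^2$.} We follow the $\mathbb{RP}^4$ construction given above with six points in general position. We must check that the fifteen meeting points $P_{ij}$ are distinct. We must also check that the three-fold hyperplane meets $S_{ij,kl,mn}$ are lines that contain only their three designated points, so that no extra incidences arise. Genericity handles both conditions, for instance by choosing the coordinate simplex together with a generic sixth point and computing.

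We then project from a generic line of $\mathbb{RP}^4$ onto a plane. A generic centre of projection preserves distinctness of points and lines and creates no new collinearities, since such coincidences are closed conditions of positive codimension. Finally, we choose a line at infinity that avoids the fifteen points, which Euclideanizes the picture and yields Figure \ref{configuration 15_3}. The identification with the Cremona-Richmond configuration then follows by its standard definition.
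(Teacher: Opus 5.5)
Your proposal is correct and follows essentially the paper's route. You count duads and synthemes to get a tetracycle-free $\{15_3\}$ with $45$ flags, use Sylvester's totals to identify synthemes with letter duads and duads with letter synthemes for self-duality, and realize the configuration via the six-point construction in $\mathbb{RP}^4$ projected to a plane. The step you flag as the main obstacle is in fact immediate: two distinct synthemes containing the same duad $\{i,j\}$ cannot lie in a common total, since a total contains each duad exactly once, so the three corresponding letter duads are automatically pairwise disjoint.
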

\vspace{0.25cm}
 
 \begin{figure}[htbp]
 \centering
\includegraphics[scale=0.55]{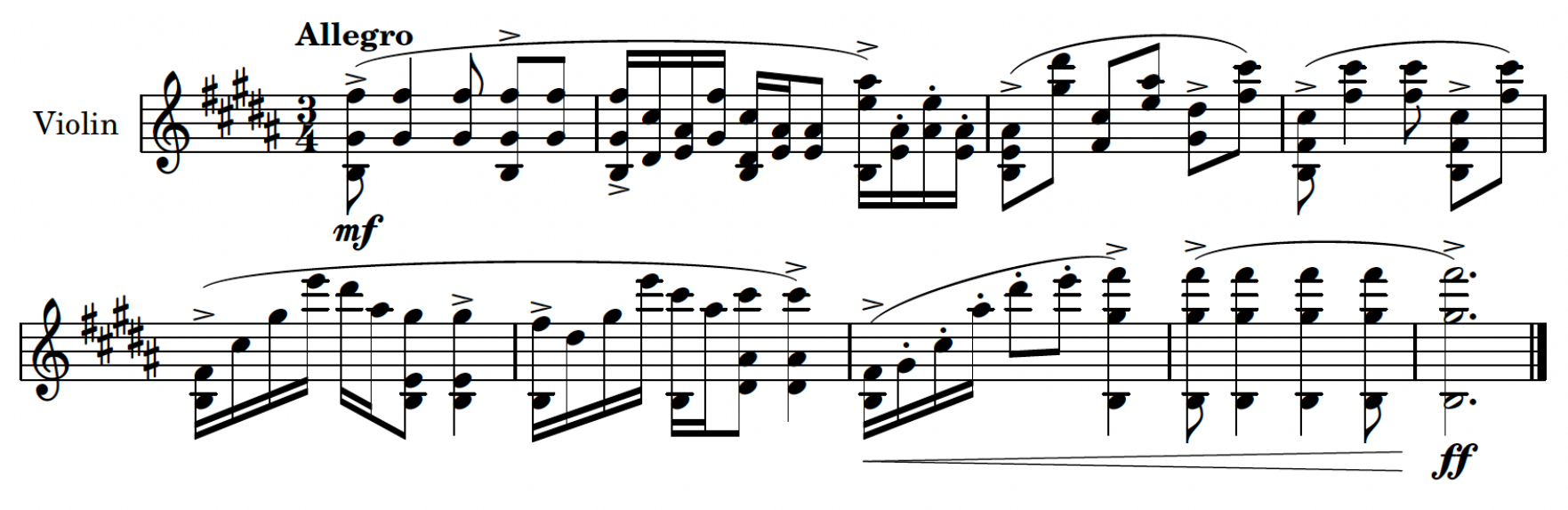}
\caption{``Decacycle for Synthetic Violin,'' based on a 5$p$-decacycle of the 12-tone tonnetz constructed from a hexachord of the form $[1, 2, 3, 4, 5, 6] = [F_{\sharp}, G_{\sharp}, C_{\sharp}, D_{\sharp}, E, A_{\sharp}]$. The duads and synthemes of the decacycle are given by $\langle 12, ab, 56, df, 13, ac, 25, bf, 36, cd, 12 \rangle$, forming the sides and vertices  of a pentagon in the Cremona-Richmond configuration of Figure  \ref{configuration 15_3}.   The structure of the pentagon can also be seen in Figure \ref{fig:Levi graph of configuration 15_3}. The note $B$ is used in this piece to ground the music. The relations between the letter duads and the number synthemes are shown in Table II of Appendix A.}
\label{Fig:Decacycle for Violin}
\end{figure}
%


There are 144 Hamiltonian cycles altogether. There are no hexacycles, but there are 90 octacycles. Among these are the five overlapping 1$p$-octacycles, which range about the perimeter Hamiltonian cycle in a manner similar to the arrangement of the five overlapping 1$p$-hexacycles in the pentatonic tonnetz.
One might not have thought there were affinities between the pentatonic system and the twelve-tone system, but there are. 

In fact, there is a curious kind of ``five-ness'' that pervades the twelve-tone system that is clearly evident in our approach, though not perhaps so obvious in traditional treatments of serialism. If any one tone of the hexachord is singled out (in this case, the tone 4), the remaining tones can be arranged with pentagonal symmetry. An example of ``five-ness'' in the traditional approach to the twelve-tone system is the involutory map from the set of integers mod 12 to itself given by multiplication by 5. 
In Figure \ref{Fig:Decacycle for Violin} we present a short composition based on this pentagonal symmetry.

\section{Subset relations vs voice-leading relations}
\label{sec:Subset relations vs voice-leading relations}

\noindent One criticism of the approach we have taken to the construction of tonnetze for the pentatonic and twelve-tone systems, and the more general systems  considered in Proposition \ref{Generalization proposition},  might be that it is based on subset inclusion relations rather than the voice-leading relations that form the basis of our treatments of the Eulerian tonnetz and the Tristan-genus tonnetz in B-H (2026). 
Nonetheless, the Levi graphs of the Fano configuration, the Eulerian D222 configuration, the Tristan-genus D228 configuration, the pentatonic Desargues configuration, and the twelve-tone Cremona-Richmond configuration do bear a striking family resemblance, hinting at some commonality in the superficially divergent methodologies. 
This suggests that we should look at subset relations as a possible basis for the construction of the Eulerian tonnetz. 

Consider the twelve tones of the chromatic scale and the twelve major triads. Clearly, each triad contains three tones; but it is also the case that each tone is contained in exactly three major triads. For example, $C$ is contained in $CM$, $FM$ and $A_{\flat}M$. So, we have a three-to-three map between the tone set and the major-triad set. It is true as well that any tone is contained in exactly three minor triads -- so one obtains another three-to-three map. What do these maps look like? What is their relation to the tonnetz? We can construct the relevant Levi graphs, and the results are shown in Figure \ref{note-chord tonnetz}. We obtain the following.

\begin{Proposition}
The incidence structure of the system consisting of the major chords and the pitch classes of the chromatic scale is a $\{12_3\}$ of Daublebsky von Sterneck type ${\rm D}222$.   Likewise, the incidence structure of the system of the minor chords and the pitch classes of the chromatic scale is a $\{12_3\}$ of Daublebsky von Sterneck type ${\rm D}222$.
\label{prop: incidence structures of note-major and note-minor}
\end{Proposition}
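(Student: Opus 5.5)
Work in $\mathbb Z_{12}$ with pitch classes $0,\dots,11$ ($C=0$). The major triad with root $r$ is $M_r=\{r,r+4,r+7\}$. A note $n$ lies in $M_r$ exactly when $r\in\{n,\,n-4,\,n-7\}$. So the note--major incidence structure is the cyclic structure generated by the difference set $\{0,4,7\}$: point $n$ is incident with block $r$ iff $n-r\in\{0,4,7\}$. Each triad contains three notes and each note lies in three triads, so the Levi graph is biregular of type $\{12_3\}$.

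First I check that this is a genuine configuration, i.e.\ that its Levi graph has no tetracycles. A tetracycle means two notes sharing two major triads, which requires a repeated difference: $a-b=c-d\neq 0$ with $a,b,c,d\in\{0,4,7\}$. The differences are $\pm4,\pm7,\pm3$, that is $4,8,7,5,3,9 \pmod{12}$. These six values are all distinct, so $\{0,4,7\}$ is a Sidon set mod $12$. Hence two triads share at most one note, the girth is at least six, and we have a cyclic $\{12_3\}$.

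Second, I identify which $\{12_3\}$ it is. B-H (2026) already established that the Eulerian tonnetz (major versus minor triads) is a D222, so the cleanest route is to exhibit an explicit isomorphism with that structure. Major triad $M_r$ and minor triad $m_s=\{s,s+3,s+7\}$ share two notes iff $s\in\{r,\,r+4,\,r-3\}$, i.e.\ $r-s\in\{0,8,3\}$ up to the convention used. So the Eulerian structure is also cyclic, with base block $\{0,3,8\}$ or its negative $\{0,4,9\}$. The task is then to find a multiplier $u\in\{1,5,7,11\}$ and a translation $t$ with $u\{0,4,7\}+t$ equal to one of these blocks. Multiplying by $-1$ and translating sends $\{0,4,7\}$ to $\{0,3,7\}$, and one checks whether some $u$ and shift reach $\{0,4,9\}$. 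For instance $u=7$ gives $\{0,4,1\}$, and $u=5$ gives $\{0,8,11\}$. If no multiplier works, I fall back on an invariant comparison. The Daublebsky von Sterneck types D222 and D228 are distinguished by cycle counts in the Levi graph, with D222 characterized by the numbers of hexacycles and of $2p$-hexacycles noted for the tonnetz. So I count hexacycles in the note--major Levi graph directly. A hexacycle corresponds to a triangle of notes pairwise sharing triads, i.e.\ to solutions of $d_1+d_2+d_3=0$ with consecutive differences drawn from the distinct-difference set. I would match that count, together with the count of Pappus-type substructures, against the D222 data. If isomorphism is clean, an even better route is to read the major-triad structure straight off the tonnetz: the vertex $C$ corresponds to the $2p$-hexacycle of six triads containing $C$, and incidence of notes with major triads mirrors incidence with minor triads.

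Third, the minor case follows from the major case with no new work. Inversion $n\mapsto -n$ sends major triads to minor triads and preserves note membership, so it is an isomorphism of incidence structures, and the note--minor system is also a D222.

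The main obstacle is the second step. Cyclic $\{12_3\}$ configurations can fall into different Daublebsky von Sterneck classes, so distinctness of differences alone does not decide the type. Either I must find an explicit multiplier-and-shift equivalence to the known D222 block, or I must compute a discriminating invariant such as the hexacycle count.
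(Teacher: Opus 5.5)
Your first and third steps are sound. The block $\{0,4,7\}$ has the six distinct differences $3,4,5,7,8,9$, so the note--major system is a cyclic $\{12_3\}$. The map $n\mapsto -n$ carries $M_r$ onto $m_{-r-7}$, so the minor case does follow from the major one.

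The gap is in the second step, and that step is the entire content of the proposition: you never actually establish that the structure is D222. You try $u=7$ and $u=5$, both of which correctly fail, and then hedge with ``if no multiplier works''. But the identity multiplier already works: $\{0,4,7\}+8=\{8,0,3\}=\{0,3,8\}$, or equivalently $-\{0,4,7\}+4=\{0,4,9\}$.

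Concretely, send the tone $n$ to the major triad $M_{n+8}$ (the major triad of which $n$ is the third), and send $M_r$ to its parallel minor $m_r$. Then
\[
n\in M_r \iff n-r\in\{0,4,7\}\iff (n+8)-r\in\{0,3,8\}\iff M_{n+8}\ \text{and}\ m_r\ \text{share two tones}.
\]
This is an isomorphism of the note--major Levi graph onto the Levi graph of the Eulerian tonnetz, which B-H (2026) identifies as D222. For instance, $C\mapsto A_\flat M$, and the triads $CM$, $FM$, $A_\flat M$ containing $C$ go to $Cm$, $Fm$, $A_\flat m$, which are exactly the $L$, $R$, $P$ neighbours of $A_\flat M$. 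With this line added your argument is complete. It is also more explicit than the paper, which simply draws the Levi graphs of Figure~\ref{note-chord tonnetz} and observes that they have the same incidence structure as the Eulerian tonnetz.

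Your fallback would not have closed the gap as stated, because the hexacycle count does not single out D222 even among cyclic $\{12_3\}$'s. Take the base block $\{0,1,5\}$. Its triangles of pairwise collinear points are the twelve translates of $\{0,1,8\}$ and the four translates of $\{0,4,8\}$, giving $16$ hexacycles. That is exactly the count for $\{0,4,7\}$, namely twelve minor triads and four augmented triads.

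Yet the two configurations are not isomorphic. Their non-collinearity graphs are the circulants on $\mathbb Z_{12}$ with connection sets $\{\pm1,\pm2,6\}$ and $\{\pm2,\pm3,6\}$, and only the second contains a $K_4$, for example $\{0,3,6,9\}$. The $2p$ label is also tied to a chosen perimeter Hamiltonian drawing, not to the abstract graph. So a count-and-match strategy would need a genuinely discriminating invariant, which you do not supply; the affine equivalence above is the right tool.
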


So, evidently, the combinatorial structure of the tonnetz is already implicit in the relation between the triads of either mode and the tones within these triads.

\begin{figure}[!htbp] 
\centering
\includegraphics[clip,scale=0.60]{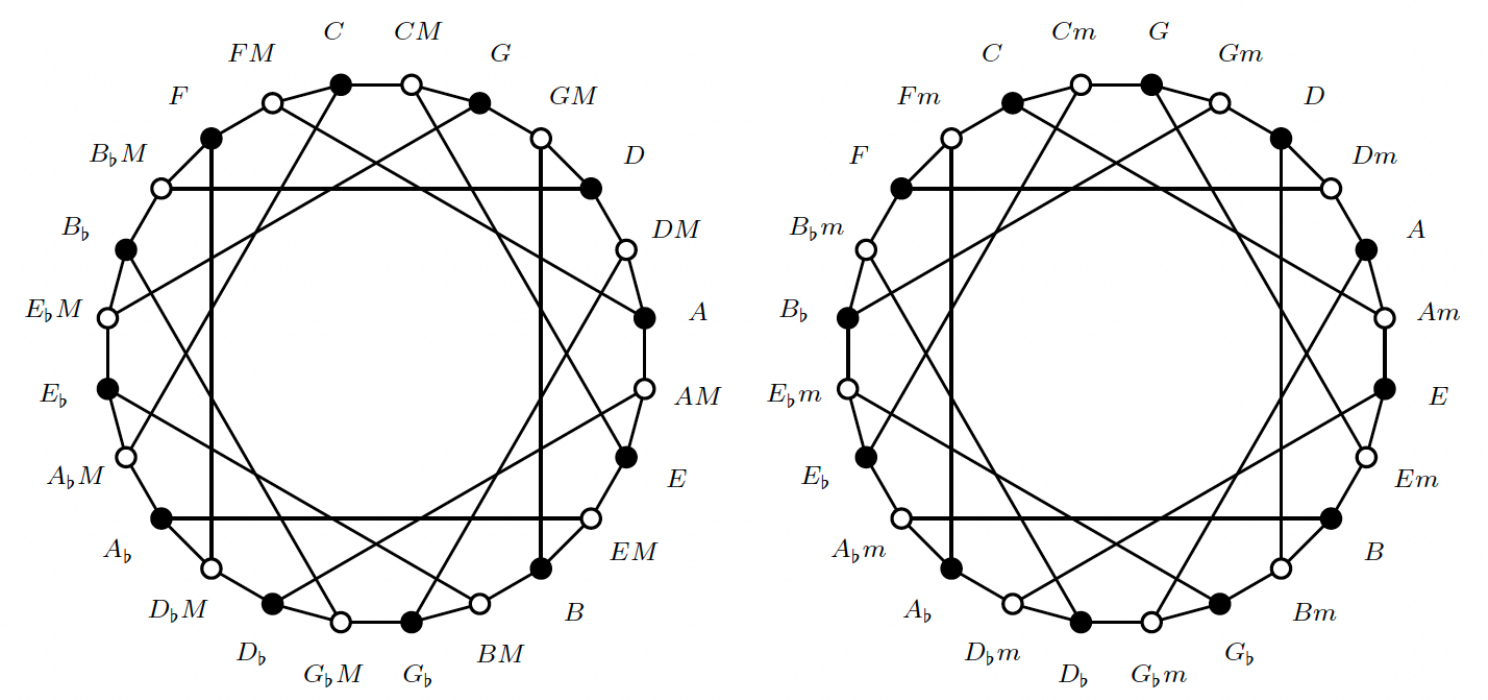}
\caption{Levi graphs for the pitch-class-to-major-triad and pitch-class-to-minor-triad tonnetze. On the left, each major triad (white vertex) is joined to three pitches (black vertices) and each pitch is joined to three major triads with the obvious set-inclusion relations. The resulting graph has the same incidence structure as that of the Eulerian tonnetz. An analogous graph can be constructed for the minor triads, shown on the right, again with the same incidence structure.}
\label{note-chord tonnetz}
\end{figure}

\begin{Proposition}
The Eulerian tonnetz can be constructed on the basis of set-inclusion relations, without reference to voice leading, distance, work, efficiency or parsimony.  
\label{prop: set inclusion relations}
\end{Proposition}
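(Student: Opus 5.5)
The plan is to read Proposition \ref{prop: set inclusion relations} as a corollary of Proposition \ref{prop: incidence structures of note-major and note-minor}, combined with the identification from B-H (2026) of the Eulerian tonnetz with the D222. It suffices to show that the Levi graph of the Eulerian tonnetz (Figure \ref{Levi_graph_of_eulerian_tonnetz}) can be recovered from pure set-inclusion data. In that graph a major triad is adjacent to a minor triad iff they share two tones.

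Work with pitch classes as $\mathbb Z_{12}$. Write $M_x=\{x,x+4,x+7\}$ for the major triad on $x$. Then the note $n$ lies in exactly $M_n$, $M_{n-4}$ and $M_{n-7}$. So the pitch-to-major incidence is the circulant $\{12_3\}$ with "lines" $\{x,x+4,x+7\}$, i.e. difference set $\{0,4,7\}$. This is a cyclic configuration (all pairwise differences $\pm4,\pm7,\pm3$ are distinct mod 12, so two lines meet in at most one point and the girth is at least six).

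The key step is to exhibit an explicit isomorphism between this incidence structure and the Eulerian one, in which major triads are points and minor triads are lines, using only set operations.

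1. Identify each minor triad with the pitch class it omits from a canonical hexacycle, or more directly with a note. The minor triad $m_y=\{y,y+3,y+7\}$ meets $M_x$ in two tones exactly when $x\in\{y+3,\,y-4,\,y\}$, namely its relative, leading-tone and parallel partners.
2. Send $m_y\mapsto$ the pitch class $y+7$, the fifth of the minor triad. Then the majors adjacent to $m_y$ are $M_{y+3},M_{y-4},M_y$. Their roots relative to $n=y+7$ are $n-4,\,n-11,\,n-7$.
3. Compare with the majors containing $n$, whose roots are $n,n-4,n-7$. The two sets differ (by $-11$ versus $0$), so this naive map fails.

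I would therefore first try a root relabelling $M_x\mapsto M_{\phi(x)}$ with $\phi$ an affine map of $\mathbb Z_{12}$ (multiplication by a unit $u\in\{1,5,7,11\}$ plus translation). The goal is to carry the difference set $\{0,3,4\}$ (roots adjacent to a minor triad) onto $\{0,4,7\}$ up to translation. Multiplication by $u$ sends $\{0,3,4\}$ to $\{0,3u,4u\}$. Checking units and translates of $\{0,4,7\}$, which are $\{0,4,7\},\{0,3,8\},\{0,5,9\}$, is a finite verification.

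The main obstacle is that no multiplier may work, since cyclic $\{12_3\}$'s can be non-isomorphic under the multiplier group. In that case I would fall back on invariants. Both structures are cyclic $\{12_3\}$'s of girth six. I would compute the Daublebsky von Sterneck classifying data (the counts of hexacycles and of "anti-flags" at distance pattern) for each and match them to D222 as tabulated in B-H (2026), or construct the isomorphism by hand along a Hamiltonian 24-cycle.

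Once the isomorphism is in hand, Proposition \ref{prop: incidence structures of note-major and note-minor} gives the D222 from tone-triad inclusion alone, and the minor case follows by the inversion $x\mapsto -x$, which swaps major and minor triads. The construction uses only the relation "tone $\in$ triad", so no voice-leading notion enters.
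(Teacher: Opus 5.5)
Once the arithmetic is fixed, your argument shows only that the pitch--major inclusion structure and the major--minor Eulerian structure are isomorphic as abstract $\{12_3\}$'s. That is Proposition~\ref{prop: incidence structures of note-major and note-minor} combined with B-H (2026). The paper reads that fact as showing the combinatorial structure of the tonnetz is ``already implicit'' in the inclusion relation, not as a construction of the tonnetz.

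An isomorphism between a Levi graph on pitches and major triads and one on major and minor triads has to send minor triads to pitches or to major triads by a relabelling imposed from outside. In your majors-to-majors set-up a correct map is $M_x\mapsto M_{-x}$, $m_y\mapsto 4-y$, so $Cm$ is matched with the pitch $E$. The other natural choice, $n\mapsto M_n$ and $M_x\mapsto m_{x+4}$, is exactly the leading-tone exchange $\mathbf{L}$. Your target graph and your check that the map is an isomorphism both use the two-common-tone ($\mathbf{P}$, $\mathbf{L}$, $\mathbf{R}$) adjacency. That is precisely the relation the proposition declines to take as primary, so your closing sentence does not follow.

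The missing idea is the one in the paper's proof, which your step~1 gestures at and then drops. Lay out the pitch--major inclusion graph as a hexagonal tessellation and note that the three tones on each hexagonal face form a minor triad; for example, $\langle C, A_{\flat}M, E_{\flat}, E_{\flat}M, G, CM, C\rangle$ gives $Cm$. The minor triads then arise intrinsically as the $2p$-hexacycles, and a major triad is incident with a minor triad exactly when it lies on that hexagon. Erasing the tones leaves the Eulerian tonnetz with its musical labels intact, and no isomorphism or voice-leading adjacency is ever invoked.

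The computation also needs correcting. The roots of the majors adjacent to $m_y$ are $\{y,y+3,y-4\}=y+\{0,3,8\}$, not $y+\{0,3,4\}$. The roots of the majors containing $n$ form $n-\{0,4,7\}=n+\{0,5,8\}$, which is not a translate of $\{0,4,7\}$. With the sets you wrote, the multiplier search returns $u=5$, which does not give an isomorphism. With the correct sets $u=-1$ works, so the ``main obstacle'' never arises. Pairing pitches with majors and majors with minors needs no multiplier at all, since $\{0,3,8\}=\{0,4,7\}+8$. Your invariant-matching fallback would in any case need a complete invariant for $\{12_3\}$'s before it could yield an isomorphism.
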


\proof We begin with the twelve tones of the chromatic scale and the twelve major triads. Associated with each tone are the three major triads to which it belongs. For example, $C$ belongs to $CM$, $A_{\flat}M$ and $FM$. Each triad contains three tones, so we get a three-to-three map. 
Then we construct a tessellation of the plane by hexagons in such a way that the tones and triads sit on the vertices in an alternating pattern. Each tone is connected by edges to the three associated major triads, and each triad is connected by edges to the three tones of which it is composed. Each hexagon in the tessellation then consists of a cycle of tones and major triads. 
For example, $C$ is contained in the cycle $\langle C, A_{\flat}M, E_{\flat}, E_{\flat}M, G, 
CM, C \rangle$. Now, the three tones in any given cycle can be combined to form a minor triad, so we mark the centre of that hexacycle with that minor triad. 
For instance, the cycle just considered is marked with $Cm$ at the centre. Each minor triad is connected to the three tones it contains and to the three major triads in the hexagon at the centre of which the minor triad sits. The resulting triangular tessellation of the plane is the complete triadic tonnetz of Figure \ref{fig:tripartite Levi graph}. If we rub out the tones and the links between tones and triads, we obtain the Eulerian tonnetz of Figure 2 in B-H (2026). \endproof

\begin{figure}[!htbp] 
\centering
\includegraphics[clip,scale=0.55]{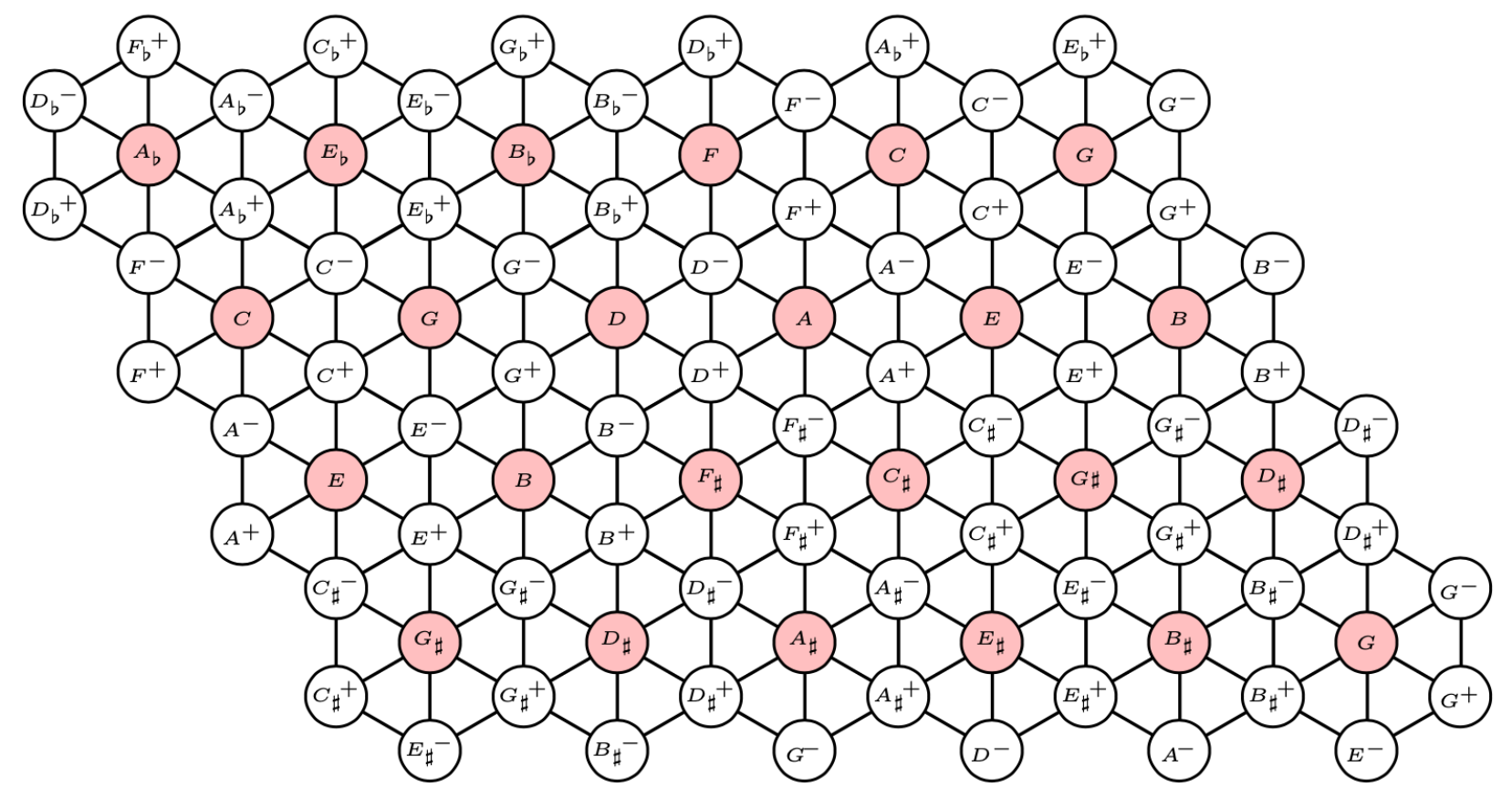}
\caption{The complete triadic tonnetz is a face-centred hexagonal tessellation admitting an infinite tripartite graph based on an underlying triangular tessellation. This array of hexagons, each with a pitch at its centre, is such that each triad is surrounded by its constituent pitches. If the pitches and the  edges connected to them are rubbed out, one is left with the tessellation of the plane by major and minor triads shown in Figure 2 of B-H (2026).  If one or the other of the remaining classes of edges is rubbed out instead, we obtain the Levi graphs of Figure \ref{note-chord tonnetz} of the present paper. Here, the major and minor triads are marked by plus and minus signs. }
\label{fig:tripartite Levi graph}
\end{figure}

The idea that the Eulerian tonnetz can be constructed on the basis of set-inclusion relations comes as a surprise, but the matter can be readily understood. As pointed out in B-H (2026), the tonnetz admits two different representations as a tessellation, one in which the pitches are at the vertices and one in which the major and minor triads are at the vertices. 
The pitch-based tonnetz is essentially that of Euler and has been used by Cohn (2012) and others as a starting point for music analysis. Triadic versions of the tonnetz appear in Waller (1978), Douthett and Steinbach (1998), Tymoczko (2011) and elsewhere.  
We say that a major triad and a minor triad are incident if they have two tones in common. This relation also functions as a basis for voice leading. The point is that one need not start from that relation as primary. 
If one takes the pitch-class-to-major-triad tonnetz of Figure \ref{note-chord tonnetz} as given, we observe that the minor triads are in one-to-one correspondence with the 2$p$-hexacycles of this graph. The three minor triads to which a tone belongs are the three 2$p$-hexacycles containing that tone. 
For example, the tone $C$ is contained in the 2$p$-hexacycles corresponding to $Cm$, $Am$ and $Fm$. This point of view is interesting since it breaks the duality between the major and minor triads and to some extent restores the primacy of the major triads. 
According to this view, the minor triads can be identified as secondary structures -- namely, the 2$p$-hexacycles. One can also ask about the role of the four 3$p$-hexacycles of the pitch-class-to-major-triad tonnetz. Inspection of the Levi graph shows that these correspond  to the four augmented triads.
In Figure \ref{fig:tripartite Levi graph}, the pitch set and the two triad sets are combined in the form of a face-centred hexagonal tessellation that treats these sets (the set of pitches, the set of major triads, and the set of minor triads) on an equal footing. A similar graph appears in Cubarsi (2024). 
There are three types of vertices (pitch, major triad, minor triad) and three types of edges (pitch to major triad, pitch to minor triad, major triad to minor triad). Each pitch vertex is joined by three pitch-class-to-major-triad edges and three pitch-class-to-minor-triad edges. Each major-triad vertex is joined by three major-triad-to-pitch-class edges and three major-triad-to-minor-triad edges. Each minor-triad vertex is joined by three minor-triad-to-pitch-class edges and three minor-triad-to-major-triad edges. 
In this way, we obtain the tripartite graph of a combinatorial configuration of three element types that can be represented by a symbol of the form $\{12_{3,3}, 12_{3,3}, 12_{3,3} \}$. 

\section{Summary and Conclusions}

\begin{quote} . \,. \,. there was a time when the study of configurations was considered the most important branch of all geometry. 

\hfill ---D.~Hilbert \& S.~Cohn-Vossen,~{\em Geometry and the Imagination} (1952)
\end{quote}

\noindent Now we are in a position to take stock of our observations concerning the various and intertwined relations between configurations, tessellations, bipartite graphs and tone networks, with attention both to what we have learned in the present article as well as in B-H (2026). A common emergent structure characteristic of a number of different music systems is that of a biregular graph. The system of triads associated with a diatonic key of a given mode gives rise to a biregular graph of type  $\{7_3\}$ and girth four. This graph, resembling a steering wheel, has seven 1$p$-tetracyclic ``cushions" along the rim of the Hamiltonian cycle. The system of diatonic tetrads (seventh chords) again results in a biregular graph of type  $\{7_3\}$ but of a distinct variety, of girth six, taking the form of the Levi graph of the Fano configuration. 

The seven seventh chords associated with a fixed diatonic scale (major or minor) have the property that any one of them overlaps on at least one tone (but possibly two or three) with all six of the others. 
For example, in the key of $D$ minor, the $Dm^7$ chord admits an overlap with $C\rm{V}^7$ on the note $C$, with whole-step transitions from $D$ to $E$ and from $F$ to $G$, and a semitone transition from $A$ to $B_\flat$. The diatonic seventh-chord tonnetz $\{7_3\}$ captures all these elements, which can be represented equivalently as a biregular graph $\{7_3\}$, as a configuration $\{7_3\}$, and as an associated tessellation, each offering different perspectives on the same architecture -- for it is of the nature of mathematics that when a multiplicity of views prevail, then no one line of sight is the end of the discussion; rather, the underlying object is the totality of its representations, and more. 

The pentatonic system may seem meagre when it is contrasted with the rich environment of the heptatonic tonal system of the common practice period, but other structures come into play when one considers tone clusters with two or three elements in each cluster and the result is not without interest. We have seen that the two aggregates of cluster form a biregular graph $\{10_3\}$ with an associated configuration, that of Desargues. Can something similar be achieved with the three and four note clusters inherent within the diatonic system? Our Proposition \ref{Generalization proposition} shows that the answer is yes, that the system of three-note and four-note clusters within the diatonic scale forms a biregular graph $\{35_4\}$. For example, $\{1, 2, 3\}$ maps to $\{1, 2, 3, 4\}$, $\{1, 2, 3, 5\}$, $\{1, 2, 3, 6\}$, $\{1, 2, 3, 7\}$, whereas $\{1, 2, 3, 4\}$ contains $\{1, 2, 3\}$, $\{1, 2, 4\}$, $\{1, 3, 4\}$, $\{2, 3, 4\}$. The system of diatonic triads and diatonic tetrads for any given key forms a $\{7_2\}$ subconfiguration of this $\{35_4\}$, in the form of a heptagon. For example, in the key of $C$ major the resulting tessareskaidekacycle takes the form $\langle CM, CM^7, Em, Em^7, GM, G{\rm V}^7, B\degree, B\varnothing^7, Dm, Dm^7, FM, FM^7, Am, Am^7, CM\rangle$. 

We have had occasion to consider three distinct biregular graphs of type $\{12_3\}$. The first is the Euler-Oettingen-Riemann tonnetz -- which we have labelled the ``Eulerian'' tonnetz. This $\{12_3\}$ in its guise as a biregular graph is the Levi graph of the configuration D222 described by Daublebsky von Sterneck (1895), as we pointed out in B-H (2026). This configuration captures all aspects of the Eulerian tonnetz and gives it a geometric representation as a system of points and lines in Euclidean space, the alignments and intersections of which carry a rich musical significance. 
The Eulerian tonnetz characterizes the well-known Oettingen-Riemann relations between major and minor triads, but can also be used to model other sets of relations between chord blocks, as we have shown. 
 A second tonnetz of type $\{12_3\}$ can be modelled on Daublebsky von Sterneck's D228 to account for the relations between dominant sevenths and half-diminished sevenths typical of Wagnerian harmonies. The D222 and the D228 form part of a series of five connected biregular graphs of type $\{12_3\}$ upon which the cyclic group $Z_{12}$ acts naturally in a transitive way. The others include Daublebsky von Sterneck's D226, and a pair of graphs of girth four, one of which is a steering wheel marked by twelve 1$p$-tetracycles  (cushions) along the rim of the Hamiltonian trajectory, the other of which being a ``bicycle wheel" marked by six 1$p$-dodecacycles, or equivalently, six 2$p$-tetracycles. But there are also some tonnetze of type $\{12_3\}$ that split into disconnected components.  First, there is the Archimedean tonnetz of B-H (2026), so-called  because its two distinct components each admit representation by an Archimedean tessellation of the plane involving squares, hexagons and dodecagons, known to Kepler. The Archimedean tonnetz can be thought of as a pair of bicycle wheels, each with six rim-to-rim spokes.  An analogous tonnetz of type $\{12_3\}$ takes the form of a pair of half-sized steering wheels, each with six cushions. Another $\{12_3\}$ splits into three octagonal bicycle wheels, each with four rim-to-rim spokes (alternatively, they can be viewed as a trio of steering wheels with four cushions each). Then there is a four-component $\{12_3\}$, each part of which consists of a hexagon with opposite vertices joined, giving a kind of hexagonal bicycle wheel with three rim-to-rim spokes. These tonnetze are ripe with musical potentiality.
 Finally, we have considered here a $\{15_3\}$ that shows promise in the context of the twelve-tone system by means of the combinatorial structures implicit in Sylvester's duads and synthemes. 

All of these biregular graph structures have elements in common and as such represent arrays of interrelated musical ``resources'' that both the composer and the analyst can draw upon in their work. Each such resource is different and has its own distinctive features. This pattern is typical of the type of organization one often encounters in mathematics, where diverse mathematical objects arise, all related, each nonetheless with its own characteristic features -- as one finds in algebraic geometry. Certainly, patterns appear and reappear, but in such a way that there is no uniform structure to the arrangement as a whole. 

On the contrary, variegation and elaboration seem to be the rule, and so it is too with the layers of new musical structures arising from the set-theoretic inclusion relations typical of the tonnetze that we have considered. We are led to a view of ``abstract musical resources'' as the totality of the orderly relations that can be established between biregular graphs (or multiregular graphs) and tone sets. These and other mathematical consequences of the present investigation we hope to pursue at greater length elsewhere.

\vspace{1.00cm}
\noindent {\bf References}
\begin{enumerate}
\vspace{0.25cm}

\bibitem{Baker 1925} 
Baker, H.~F.~(1925)~{\em Principles of Geometry}, Vol.~IV. Cambridge University Press. 

\bibitem{Boland-Hughston 2026} 
Boland, J.~R. and Hughston, L.~P.~(2026) Configurations, Tessellations and Tone Networks. {\em Journal of Mathematics and Music}. doi:
10.1080/17459737.2026.2678317.

\bibitem{Brier et al 2021} 
Brier, R. and Bryant, D.~(2021) On the Steiner Quadruple System with Ten Points. {\em Bulletin of the Institute of Combinatorics and its Applications} \,{\bf 91}, 115-127.

\bibitem{Cayley 1846} 
Cayley, A.~(1846) Sur quelques Th\'eor\`emes de la G\'eom\'etrie de Position. Collected Mathematical Papers, Vol.~1, 1889, 317-328.

\bibitem{Cohn 2012}
Cohn, R.~(2012)~{\em Audacious Euphony: Chromaticism and the Triad's Second Nature}. Oxford, England: Oxford University Press.

\bibitem{Coxeter 1950} 
Coxeter, H.~S.~M.~(1950)~Self-Dual Configurations and Regular Graphs. {\em Bulletin of the American Mathematical Society} {\bf 56},~413-455.

\bibitem{Coxeter 1958} 
Coxeter, H.~S.~M.~(1958) Twelve Points in $PG(5,3)$ with 95040 Self-Transformations. {\em Proceedings of the Royal Society A}  {\bf 247}, 279-293.

\bibitem{Coxeter 1974} 
Coxeter, H.~S.~M.~(1974)~{\em Projective Geometry}, second edition.~Toronto and Buffalo: University of Toronto Press.

\bibitem{Cubarsi 2024} 
Cubarsi, R.~(2024)~An Algebra of Chords for a Non-degenerate Tonnetz.~{\em Journal of Mathematics and Music} {\bf 18} (3),~259-295.

\bibitem{Daublebsky von Sterneck 1895}
Daublebsky von Sterneck, R.~(1895)~Die Configurationen $12_3$. {\em Monatshefte f\"ur Mathematik und Physik} {\bf 6},
223-260.

\bibitem{Douthett-Steinbach 1998}
Douthett, J. and Steinbach, P.~(1998)~Parsimonious Graphs: A Study in Parsimony, Contextual Transformations, and Modes of Limited Transposition.~{\em Journal of Music Theory} {\bf 42} (2),~241-263.

\bibitem{Forte 1979} 
Forte, A.~(1979)~{\em Tonal Harmony in Concept and Practice}, third edition. New York: Holt, Rinehart and Winston. 

\bibitem{Grunbaum2009} 
Gr\"unbaum, B.~(2009)~{\em Configurations of Points and Lines}.~Graduate Studies in Mathematics {\bf 103}. Providence, Rhode Island:
American Mathematical Society.

\bibitem{Hilbert1952} 
Hilbert, D. and Cohn-Vossen, S. (1952) {\em Geometry and the Imagination}. New York: Chelsea Publishing Company. English translation of {\em Anschauliche Geometrie} (1932) Berlin: Springer.

\bibitem{Levarie 1954} 
Levarie, S.~(1954)~{\em Fundamentals of Harmony}. New York: Ronald Press.

\bibitem{Levi 1929} 
Levi, F.~W.~(1929)~{\em Geometrische Konfigurationen}. Leipzig: S.~Hirzel.

\bibitem{Lewin 1967}
Lewin, D.~(1967) A Study of Hexachord Levels in Schoenberg's Violin Fantasy.
{\em Perspectives of New Music} {\bf 6} (1),  18-32. 

\bibitem{Mason 1977}
Mason, J.~H.~(1977)~Matroids as the Study of Geometrical Configurations. In: {\em Higher Combinatorics}, Aigner, M.,~ed., 133-176. Dordrecht: D.~Reidel.  

\bibitem{Pisanski-Servatius 2013} 
Pisanski, T.~and Servatius, B. (2013) {\em Configurations from a Graphical Viewpoint}. New York: Birkh\"auser.

\bibitem{Piston 1987} 
Piston, W.~(1987)~{\em Harmony}, fifth edition. New York: W. W. Norton.  Revised and expanded by Mark DeVoto.

\bibitem{Richmond 1900} 
Richmond, H.~W.~(1900)~On the Figure of Six Points in Space of Four Dimensions.~{\em Quarterly Journal of Pure and Applied Mathematics}~{\bf 31} (3),~125-160.

\bibitem{Schoenberg 1978}
Schoenberg, A.~(1978)~{\em Theory of Harmony}.  Berkeley and Los Angeles, California: University of California Press. English edition, translated by Roy E. Carter, based on {\em Harmonielehre}, third edition (1922), Vienna: Universal Edition. 

\bibitem{Steiner 1997}
Steiner, G.~(1997)~{\em Errata: An Examined Life}. London: Weidenfeld and Nicolson. 

\bibitem{Sylvester 1844} 
Sylvester, J.~J.~(1844) Elementary Researches in the Analysis of Combinatorial Aggregation.~{\em Philosophical Magazine}~XXIV, 285-296. 

\bibitem{Sylvester 1861}
Sylvester, J.~J.~(1861) Note on the Origin of the Unsymmetrical Six-Valued Function of Six Letters.~{\em Philosophical Magazine}~XXI, 369-377.

\bibitem{Tutte 1947} 
Tutte, W.~T.~(1947)~A Family of Cubical Graphs.~{\em Mathematical Proceedings of the Cambridge Philosophical Society}~{\bf 43},~459-474.  

\bibitem{Tymoczko 2011}
Tymoczko, D.~(2011)~{\em A Geometry of Music: Harmony and Counterpoint in the Extended Common Practice}. Oxford, England: Oxford University Press.

\bibitem{Waller 1978} 
Waller, D.~A.~(1978)~Some Combinatorial Aspects of the Musical Chords.~{\em Mathematical Gazette}~{\bf 62} (419),~12-15.

\end{enumerate}

\newpage
\section*{APPENDIX A: Tables of Duads and Synthemes}
\begin{table}[!htbp]
\scriptsize
\centering
\begin{tabular}{ |p{0.3cm}|p{1.4cm}|p{1.4cm}|p{1.4cm}|p{1.4cm}|p{1.4cm}|}
\hline
 \multicolumn{6}{|c|}{\textbf{Letters as totals of number synthemes}} \\
\hline
$a$ & 12, 34, 56 & 13, 25, 46 & 14, 26, 35 & 15, 24, 36 & 16, 23, 45 \\
$b$ & 12, 34, 56 & 16, 24, 35 & 15, 23, 46 & 13, 26, 45 & 14, 25, 36 \\
$c$ & 13, 25, 46 & 16, 24, 35 & 12, 36, 45 & 14, 23, 56 & 15, 26, 34 \\
$d$ & 14, 26, 35 & 15, 23, 46 & 12, 36, 45 & 16, 25, 34 & 13, 24, 56 \\
$e$ & 15, 24, 36 & 13, 26, 45 & 14, 23, 56 & 16, 25, 34 & 12, 35, 46 \\
$f$ & 16, 23, 45 & 14, 25, 36 & 15, 26, 34 & 13, 24, 56 & 12, 35, 46 \\
\hline
\end{tabular}
\captionof{table}{Each of the six letters corresponds to a collection of five number synthemes.}
\end{table}

\begin{table}[!htbp]
\scriptsize
\centering
\begin{tabular}{ |p{0.5cm}|p{1.8cm}||p{0.5cm}|p{1.8cm}||p{0.5cm}|p{1.8cm}|}
\hline
 \multicolumn{6}{|c|}{\textbf{Duads of letters as synthemes of numbers}} \\
\hline
$ab$ & 12, 34, 56 & $bc$ & 16, 24, 35 & $ce$ & 14, 23, 56\\
 $ac$ & 13, 25, 46  & $bd$ &15, 23, 46 & $cf$ &15, 26, 34\\
 $ad$ &14, 26, 35 & $be$ &13, 26, 45 & $de$ &16, 25, 34 \\
 $ae$ &15, 24, 36 & $bf$ &14, 25, 36 & $df$ &13, 24, 56\\
 $af$ & 16, 23, 45 & $cd$ &12, 36, 45 & $ef$ &12, 35, 46\\
\hline
\end{tabular}
\captionof{table}{Each duad of letters corresponds to a syntheme of numbers.}
\end{table} 

\begin{table}[!htbp]
\scriptsize
\centering
\begin{tabular}{ |p{0.3cm}|p{1.4cm}|p{1.4cm}|p{1.4cm}|p{1.4cm}|p{1.4cm}|}
\hline
 \multicolumn{6}{|c|}{\textbf{Numbers as totals of letter synthemes}} \\
\hline
$1$ & $ab, cd, ef$ & $ac, be, df$ & $ad, bf, ce$ & $ae, bd, cf$ & $af, bc, de$ \\
$2$ & $ab, cd, ef$ & $af, bd, ce$ & $ae, bc, df$ & $ac, bf, de$ & $ad, be, cf$ \\
$3$ & $ab, cf, de$ & $ad, bc, ef$ & $ae, bf, cd$ & $ac, be, df$ & $af, bd, ce$ \\
$4$ & $af, be, cd$ & $ac, bd, ef$ & $ad, bf, ce$ & $ae, bc, df$ & $ab, cf, de$ \\
$5$ & $ab, ce, df$ & $ae, bd, cf$ & $ac, bf, de$ & $ad, bc, ef$ & $af, be, cd$ \\
$6$ & $af, bc, de$ & $ad, be, cf$ & $ae, bf, cd$ & $ac, bd, ef$ & $ab, ce, df$ \\
\hline
\end{tabular}
\captionof{table}{Each of the numbers corresponds to a collection of five letter synthemes.}
\end{table}

\begin{table}[!htbp]
\scriptsize
\centering
\begin{tabular}{ |p{0.5cm}|p{1.8cm}||p{0.5cm}|p{1.8cm}||p{0.5cm}|p{1.8cm}|}
\hline
 \multicolumn{6}{|c|}{\textbf{Duads of numbers as synthemes of letters}} \\
\hline
 $12$ & $ab, cd, ef$ & $23$ & $af, bd, ce$ & $35$ & $ad, bc, ef$\\
 $13$ & $ac, be, df$  & $24$ & $ae, bc, df$ & $36$ & $ae, bf, cd$\\
 $14$ & $ad, bf, ce$ & $25$ & $ac, bf, de$ & $45$ & $af, be, cd$\\
 $15$ & $ae, bd, cf$ & $26$ & $ad, be, cf$ & $46$ & $ac, bd, ef$\\
 $16$ & $af, bc, de$ & $34$ & $ab, cf, de$ & $56$ & $ab, ce, df$\\
 \hline
\end{tabular}
\captionof{table}{Each duad of numbers corresponds to a syntheme of letters.}
\end{table}

\newpage

\newpage 
\FloatBarrier
\appendix
\section*{APPENDIX B: Cycle Counts}

\begin{table}[!htbp]
\centering
\scriptsize

\begin{minipage}[t]{0.40\textwidth}
\vspace{22pt}
\centering
{\setlength{\tabcolsep}{3pt}
\begin{tabular}{|c|cccccccc|c|}
\hline
\multicolumn{10}{|c|}{\textbf{Cycle Count for Diatonic Seventh Tonnetz }} \\
\hline
$ $ & $0$ & $1$ & $2$ & $3$ & $4$ & $5$ & $6$ & $7$ & $\text{Total}$ \\
\hline
$6$  & $0$   & $7$   & $14$   & $7$   & $0$   & $0$   & $0$   & $0$   & $28$ \\
$8$  & $0$   & $0$   & $7$    & $14$  & $0$   & $0$   & $0$   & $0$   & $21$ \\
$10$ & $0$   & $7$   & $7$    & $28$  & $35$  & $7$   & $0$   & $0$   & $84$ \\
$12$ & $0$   & $0$   & $7$    & $7$   & $21$  & $21$  & $0$   & $0$   & $56$ \\
$14$ & $1$   & $0$   & $0$    & $7$   & $0$   & $7$   & $7$   & $2$   & $24$ \\
\hline
$\text{Total}$ & $1$ & $14$ & $35$ & $63$ & $56$ & $35$ & $7$ & $2$ & $213$ \\
\hline
\end{tabular}}
\end{minipage}
\hfill
\begin{minipage}[t]{0.57\textwidth}
\vspace{0pt}
\centering
{\setlength{\tabcolsep}{3pt}
\begin{tabular}{|c|ccccccccccc|c|}
\hline
\multicolumn{13}{|c|}{\textbf{Cycle Count for Pentatonic Tonnetz}} \\
\hline
$ $ & $0$ & $1$ & $2$ & $3$ & $4$ & $5$ & $6$ & $7$ & $8$ & $9$ & $10$ & $\text{Total}$ \\
\hline
$6$  & $0$ & $5$ & $10$ & $5$  & $0$   & $0$   & $0$   & $0$   & $0$   & $0$   & $0$   & $20$ \\
$8$  & $0$ & $0$ & $10$ & $20$ & $0$   & $0$   & $0$   & $0$   & $0$   & $0$   & $0$   & $30$ \\
$10$ & $0$ & $5$ & $20$ & $45$ & $50$  & $12$  & $0$   & $0$   & $0$   & $0$   & $0$   & $132$ \\
$12$ & $0$ & $5$ & $5$  & $30$ & $60$  & $45$  & $5$   & $0$   & $0$   & $0$   & $0$   & $150$ \\
$14$ & $0$ & $0$ & $20$ & $40$ & $120$ & $125$ & $110$ & $5$   & $0$   & $0$   & $0$   & $420$ \\
$16$ & $0$ & $5$ & $0$  & $25$ & $60$  & $105$ & $80$  & $85$  & $0$   & $0$   & $0$   & $360$ \\
$18$ & $0$ & $0$ & $5$  & $20$ & $30$  & $40$  & $95$  & $90$  & $30$  & $10$  & $0$   & $320$ \\
$20$ & $1$ & $0$ & $0$  & $0$  & $0$   & $7$   & $5$   & $0$   & $5$   & $5$   & $1$   & $24$ \\
\hline
$\text{Total}$ & $1$ & $20$ & $70$ & $185$ & $320$ & $334$ & $295$ & $180$ & $35$ & $15$ & $1$ & $1456$ \\
\hline
\end{tabular}}
\end{minipage}
\vspace{0.25cm}
\caption{Cycle count for the Fano Levi graph of the diatonic seventh tonnetz on the left and for the Desargues Levi graph of the pentatonic tonnetz on the right. The rows are labelled by cycle length and the columns are labelled by the number of $p$-crossings. Each entry shows the number of cycles of a given length with the given number of crossings. Totals are given for each row and each column.}
\end{table}
\vspace{0.25cm}

\begin{table}[!htbp]
\scriptsize
\centering
{\setlength{\tabcolsep}{3pt}
\begin{tabular}{|c|ccccccccccccccc|c|}
\hline
\multicolumn{17}{|c|}{\textbf{Cycle Count for Twelve-Tone Tonnetz}} \\
\hline
$ $ & $0$ & $1$ & $2$ & $3$ & $4$ & $5$ & $6$ & $7$ & $8$ & $9$ & $10$ & $11$ & $12$ & $13$ & $14$ & $\text{Total}$ \\
\hline
$8$  & $0$ & $5$ & $30$ & $45$ & $10$ & $0$ & $0$ & $0$ & $0$ & $0$ & $0$ & $0$ & $0$ & $0$ & $0$ & $90$ \\
$10$ & $0$ & $5$ & $5$  & $30$ & $25$ & $7$ & $0$ & $0$ & $0$ & $0$ & $0$ & $0$ & $0$ & $0$ & $0$ & $72$ \\
$12$ & $0$ & $0$ & $10$ & $70$ & $135$ & $80$ & $5$ & $0$ & $0$ & $0$ & $0$ & $0$ & $0$ & $0$ & $0$ & $300$ \\
$14$ & $0$ & $5$ & $40$ & $105$ & $295$ & $365$ & $255$ & $15$ & $0$ & $0$ & $0$ & $0$ & $0$ & $0$ & $0$ & $1080$ \\
$16$ & $0$ & $0$ & $15$ & $85$ & $280$ & $465$ & $530$ & $230$ & $15$ & $0$ & $0$ & $0$ & $0$ & $0$ & $0$ & $1620$ \\
$18$ & $0$ & $5$ & $5$ & $80$ & $350$ & $845$ & $1085$ & $1130$ & $320$ & $20$ & $0$ & $0$ & $0$ & $0$ & $0$ & $3840$ \\
$20$ & $0$ & $0$ & $35$ & $135$ & $325$ & $1007$ & $1685$ & $2080$ & $1655$ & $580$ & $22$ & $0$ & $0$ & $0$ & $0$ & $7524$ \\
$22$ & $0$ & $5$ & $0$ & $55$ & $245$ & $575$ & $1415$ & $2035$ & $2070$ & $1410$ & $460$ & $10$ & $0$ & $0$ & $0$ & $8280$ \\
$24$ & $0$ & $5$ & $0$ & $45$ & $155$ & $535$ & $855$ & $1685$ & $2345$ & $2135$ & $1335$ & $345$ & $10$ & $0$ & $0$ & $9450$ \\
$26$ & $0$ & $0$ & $25$ & $25$ & $85$ & $245$ & $615$ & $895$ & $1390$ & $1610$ & $1410$ & $965$ & $285$ & $10$ & $0$ & $7560$ \\
$28$ & $0$ & $0$ & $0$ & $25$ & $5$ & $20$ & $105$ & $140$ & $150$ & $255$ & $305$ & $200$ & $170$ & $60$ & $5$ & $1440$ \\
$30$ & $1$ & $0$ & $0$ & $0$ & $0$ & $7$ & $0$ & $15$ & $20$ & $20$ & $16$ & $10$ & $30$ & $20$ & $5$ & $144$ \\
\hline
$\text{Total}$ & $1$ & $30$ & $165$ & $700$ & $1910$ & $4151$ & $6550$ & $8225$ & $7965$ & $6030$ & $3548$ & $1530$ & $495$ & $90$ & $10$ & $41400$ \\
\hline
\end{tabular}}
\vspace{0.25cm}
\caption{Cycle count for the Levi graph of the Cremona-Richmond configuration for the twelve-tone tonnetz.}
\end{table}

\vspace{0.25cm}

\end{document}